
\documentclass[11pt]{amsart}
\usepackage[margin=1in]{geometry}  
\usepackage{amssymb}
\usepackage[all]{xy}

\linespread{1.5}

\newcommand{\pd}{\operatorname{pd}}
\newcommand{\Ker}{\operatorname{Ker}}
\newcommand{\im}{\operatorname{Im}}

\newcommand{\rank}{\operatorname{rank}}

\def\a{{\mathbf a}}
\def\aa{{\alpha}}

\def\N{{\mathbb N}}
\def\B{{\mathcal B}}

\def\Z{{\mathbb Z}}

\def\m{{\mathbf m}}
\def\e{{\mathbf e}}

\def\b{{\mathbf b}}
\def\cc{{\mathbf c}}
\def\d{{\mathbf d}}
\def\u{{\mathbf u}}
\def\v{{\mathbf v}}
\def\t{{\mathbf t}}
\def\x{{\mathbf x}}
\def\y{{\mathbf y}}
\def\z{{\mathbf z}}
\def\w{{\mathbf w}}
\def\iBetti{{$i$-Betti~~}} 

\newtheorem{theorem}{Theorem}[section]
\newtheorem{lemma}[theorem]{Lemma}

\theoremstyle{definition}

\newtheorem{corollary}[theorem]{Corollary}
\newtheorem{proposition}[theorem]{Proposition}
\newtheorem{example}[theorem]{Example}

\theoremstyle{remark}
\newtheorem{remark}[theorem]{Remark}

\numberwithin{equation}{section}


\begin{document}

\title{Gluing semigroups and strongly indispensable free resolutions}

\author{Mesut \c{S}ah\. in}
\address{Department of Mathematics,
Hacettepe University, Beytepe,  06800, Ankara, Turkey}
\email{mesut.sahin@hacettepe.edu.tr}

\author{Leah Gold Stella}
\address{Department of Mathematics,
Cleveland State University, Cleveland, OH, USA}
\email{l.gold33@csuohio.edu}
\thanks{The first author is supported by the project 114F094 under the program 1001 of the
Scientific and Technological Research Council of Turkey.}
\date{\today}

\subjclass[2010]{13D02;20M25;14M25;13A02}
\keywords{semigroup rings; free resolutions; indispensability.}

\commby{}

\dedicatory{}

\begin{abstract}
We study strong indispensability of minimal free resolutions of semigroup rings focusing on the operation of gluing used in literature 
to take examples with a special property and produce new ones.
We give a naive condition to determine whether gluing of two semigroup rings has a strongly indispensable minimal free resolution. As applications, we determine simple gluings of $3$-generated non-symmetric, $4$-generated symmetric and pseudo symmetric numerical semigroups as well as obtain infinitely many new complete intersection semigroups of any embedding dimensions, having strongly indispensable minimal free resolutions.
\end{abstract}

\maketitle

\section{introduction} 
Let $\N$ denote the set of non-negative integers and consider the affine semigroup $S$ generated minimally by $\m_1,\dots,\m_n \in \N^r$. 
Let $K$ be a field. 
Turning the additive structure of $S$ into a multiplicative one yields an algebra $K[S]$
called the affine semigroup ring associated to $S$. Any polynomial ring $R=K[x_1,\dots,x_n]$, can be graded by $S$, via $\deg _S (x_i)=\m_i$, yielding a graded map $R \rightarrow K[S]$, sending $x_i$ to $\t^{\m_i}:=t_1^{m_{i1}}\cdots t_r^{m_{ir}}$, whose kernel, denoted by $I_S$, is called the toric ideal of $S$. When $K$ is algebraically closed, $K[S]$ is isomorphic to the coordinate ring $R/I_S$ of the affine toric variety $V(I_S)$.

 Toric ideals with unique minimal generating sets or equivalently those that are generated by indispensable binomials attracted researchers attention 
 due to its importance for algebraic statistics. This 
 connection
 leads to a search for criteria to characterize indispensability (see e.g. \cite{ CKT,CTV,go,koj,ov,tak}). Indispensable binomials are those that appear in every minimal \textit{binomial} generating set up to a constant multiple. Strongly indispensable binomials are those appearing in \textit{every} minimal generating set, up to a constant multiple. In the same vein, as introduced for the first time by Charalambous and Thoma in \cite{haraCM,haraJA}, strongly indispensable higher syzygies are those appearing in \textit{every} minimal free resolution. Semigroups all of whose higher syzygy modules are generated minimally by strongly indispensable elements are said to have a strongly indispensable minimal free resolution, SIFRE for short. The statistical models having SIFREs or equivalently having uniquely generated higher syzygy modules are a subclass of those having a unique Markov basis and therefore have a better potential statistical behaviour. 

It is difficult to construct examples having SIFREs.  
It is known that generic 
lattice ideals have SIFRE (\cite[Theorem 4.2]{peeva}, \cite[Theorem 4.9]{haraCM}). Numerical semigroups having SIFREs have been classified for some small embedding dimensions in \cite{bafrsa,pseudo}. 

Motivated by the third question stated by Charalambous and Thoma at the end of \cite{haraJA}, our main aim in this article is to identify some semigroups having SIFREs. We focus on the operation of gluing used in literature to produce more examples with a special property from the existing one (see e.g. \cite{watanabe,morales,ext,numata,fel}). In Section $2$, we restate the general method given by \cite[Theorem 4.9]{haraCM} to check if a given semigroup has a SIFRE, see Lemma \ref{indispensable}. In section $3$, we study the gluing $S$ of $S_1$ and $S_2$. We show that a minimal graded free resolution for $K[S]$ is obtained from that of $K[S_1]$ and $K[S_2]$ via the tensor product of three complexes (for details see Theorem \ref{res}). As a consequence we get the Betti $S$-degrees, see Lemma \ref{betti}, which is key for our refined criterion special to semigroups obtained by gluing. We then give a naive criterion to determine whether $K[S]$ has a SIFRE, see Theorem \ref{gluing}. We conclude the section with Example \ref{eg} illustrating the efficiency of our criterion. In the last section, we focus on a particular gluing also known as extension or simple gluing, and get an even more refined criterion 
in this case.
 It turns out that this condition is very helpful for producing infinitely many examples having SIFRE from a single example. As applications, we determine extensions of $3$-generated non-symmetric, $4$-generated symmetric and pseudo symmetric numerical semigroups as well as obtain infinitely many complete intersection semigroups of any embedding dimension, having SIFREs.

\section{strongly indispensable minimal free resolutions}

Let $({\bf F},\phi)$ be a graded minimal free $R$-resolution of $K[S]$, where
$${\bf F}: \ 0\longrightarrow R^{\beta_{k}}\stackrel{\phi_{k}}{\longrightarrow}R^{\beta_{k-1}}\stackrel{\phi_{k-1}}{\longrightarrow}\cdots\stackrel{\phi_2}{\longrightarrow}R^{\beta_1}\stackrel{\phi_1}{\longrightarrow}R^{\beta_0}{\longrightarrow}K[S]{\longrightarrow}0. $$ 

The elements $s_{i,j}\in S$ for which $\displaystyle R^{\beta_{i}}=\bigoplus_{j=1}^{\beta_i} R[{-s_{i,j}}]$ are called 
\iBetti
$S$-degrees. Denote by $\B_i(S)$ the set of these \iBetti $S$-degrees for $1\leq i \leq \pd(S)$ and let $\B_i(S)=\{0\}$ otherwise, where $\pd(S)$ is the projective dimension of $K[S]$. Note that we allow $\B_i(S)$ to contain repeating elements in a nonstandard way for convenience.

The resolution $({\bf F},\phi)$ is called \textbf{strongly indispensable} if for any graded minimal resolution $({\bf G},\theta)$, we have an injective complex map
$i\colon({\bf F},\phi)\longrightarrow({\bf G},\theta)$. When $({\bf F},\phi)$ is strongly indispensable $S$ or $K[S]$ is said to have a SIFRE for short.

The following general criterion about strong indispensability is a version of Charalambous and Thoma's Theorem~4.9 in \cite{haraCM} stated slightly different for semigroup rings. We compare two elements $s_1$ and $s_2$ of $S$ saying that $s_1 < s_2$ if $s_2-s_1 \in S$. An element is regarded minimal with respect to this partial ordering.
\begin{lemma} \label{indispensable} A minimal graded free resolution of $K[S]$ is strongly indispensable if and only if $\pm (b_{i}-b^{'}_{i}) \notin S$ for all $b_{i},b^{'}_{i}\in \B_i(S)$ and for each $1\leq i \leq \pd(S)$. 
\end{lemma}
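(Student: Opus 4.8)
The plan is to reduce everything to the behaviour of multigraded matrices whose entries have prescribed $S$-degrees, since both the differentials and the comparison maps between resolutions are built from such matrices. The starting point is the fact that the $S$-graded Betti numbers are invariants of $K[S]$: for any minimal graded resolution $(\mathbf{G},\theta)$ the free module $G_i$ decomposes with exactly the same multiset of shifts $\B_i(S)$ as $F_i$. Hence a graded complex map $i\colon\mathbf{F}\to\mathbf{G}$ is given in homological degree $i$ by a matrix whose $(a,b)$ entry is a homogeneous element of $R$ of $S$-degree $b_{i,a}-b'_{i,b}$, where $b_{i,a}\in\B_i(S)$ indexes $F_i$ and $b'_{i,b}\in\B_i(S)$ indexes $G_i$. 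The single observation driving the proof is that such an entry can be nonzero only when $b_{i,a}-b'_{i,b}\in S$, that it is a nonzero scalar exactly when $b_{i,a}=b'_{i,b}$, and that it lies in the irrelevant maximal ideal $(x_1,\dots,x_n)$ precisely when $b_{i,a}-b'_{i,b}\in S\setminus\{0\}$.

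For the direction assuming $\pm(b_i-b'_i)\notin S$, I would argue that this hypothesis forces every such matrix to be diagonal up to scalars. Indeed, distinctness of the shifts (the special case $b_i=b'_i$, which would give $0\in S$ and is thus forbidden) means each $S$-degree occurs once in $\B_i(S)$, while pairwise incomparability kills every off-diagonal entry because its degree lies outside $S$. Consequently any graded comparison map lifting the identity of $K[S]$ is represented in each degree by an invertible diagonal (scalar) matrix, hence is an isomorphism and in particular injective; running this over all minimal $\mathbf{G}$ produces the required injective complex maps, and the same rigidity shows that each nonzero entry of each $\phi_i$ is pinned down up to a unit, i.e. every higher syzygy is indispensable. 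This yields strong indispensability.

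For the converse I would argue by contraposition and produce an obstruction. Suppose that for some $i$ there are $b_i,b'_i\in\B_i(S)$ with distinct indices and, after relabelling, $b'_i-b_i\in S$. If $b_i=b'_i$ the shift is repeated, and an invertible $2\times 2$ change of basis on the two corresponding generators exhibits a second minimal resolution in which the individual syzygies have been mixed; if $b'_i-b_i\in S\setminus\{0\}$, choosing a monomial $x^{\u}$ of that degree and applying the elementary graded automorphism $e_{b'_i}\mapsto e_{b'_i}+x^{\u}e_{b_i}$ (homogeneous precisely because $\deg_S x^{\u}=b'_i-b_i$) transforms $\phi_i$ and $\phi_{i+1}$ into a genuinely different minimal resolution $\mathbf{G}$. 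In either case the syzygy attached to $b'_i$ is altered, and I would show that no injective complex map $\mathbf{F}\to\mathbf{G}$ can respect this alteration, contradicting strong indispensability.

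The main obstacle is precisely this last step: turning the informal statement that the syzygy in degree $b'_i$ can be changed into a rigorous proof that the modified $\mathbf{G}$ admits no injective complex map from $\mathbf{F}$. This requires tracking how the elementary automorphism propagates through the adjacent differentials so that $\mathbf{G}$ remains a minimal resolution, and then using the degree bookkeeping of the first paragraph to show that any candidate chain map $\mathbf{F}\to\mathbf{G}$ is forced, in the offending homological degree, to have a non-unit scalar part and hence to fail injectivity after reduction modulo $(x_1,\dots,x_n)$. Treating the multiplicity case $b_i=b'_i$ and the strict comparability case $b'_i-b_i\in S\setminus\{0\}$ uniformly, and checking that the construction introduces no accidental cancellations in neighbouring degrees, is where the care is needed.
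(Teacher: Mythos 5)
Your route is genuinely different from the paper's, and unfortunately it does not work. The paper never proves this equivalence from first principles: it quotes Theorem 4.9 of Charalambous and Thoma \cite{haraCM} --- $K[S]$ has a SIFRE if and only if, for each $i$, the $i$-Betti degrees are minimal elements of $\B_i(S)$ and pairwise distinct --- and then merely translates ``minimal and distinct'' into the stated condition ($b_i \neq b'_i$ is forced because $0 \in S$, and minimality is equivalent to no difference lying in $S \setminus \{0\}$). You instead attempt to prove both implications directly from the sentence ``there is an injective complex map into every graded minimal resolution,'' and that is exactly where the proposal breaks down.

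The step you yourself flag as the ``main obstacle'' is not merely delicate; it is impossible as stated. The resolution $\mathbf{G}$ you build from $\mathbf{F}$ by the elementary graded change of basis $e_{b'_i} \mapsto e_{b'_i} + x^{\mathbf{u}} e_{b_i}$ (or by an invertible $2 \times 2$ mixing of two generators of equal degree) always admits an injective complex map from $\mathbf{F}$: namely the change-of-basis map itself, extended by the identity in all other homological degrees; it is even a degree-preserving isomorphism of complexes lifting the identity of $K[S]$. More generally, by the comparison theorem and Nakayama's lemma, any lift of the identity between two minimal graded resolutions is an isomorphism, so under your literal reading of the definition \emph{every} minimal resolution would be strongly indispensable, and the ``only if'' direction of the lemma would simply be false (the paper's Example with the repeated Betti degree $1640$ would be a counterexample). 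The same observation exposes a defect in your first direction: the injectivity you need follows from minimality alone, via Nakayama, whether or not your comparison matrices are diagonal, so the hypothesis $\pm(b_i - b'_i) \notin S$ is doing no real work there. The genuine content of strong indispensability resides in the precise definition of Charalambous and Thoma, which is strictly finer than the existence of an abstract injective chain map --- it is what makes their Theorem 4.9 nontrivial --- and a self-contained proof of this lemma would have to reprove that theorem at that level of precision. Your sketch does not engage with this, so the gap is real and, in the converse direction, unfixable along the lines you propose.
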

\begin{proof} It follows from Theorem $4.9$ in \cite{haraCM} that $K[S]$ has a SIFRE if and only if \iBetti degrees are minimal elements of $\B_i(S)$ and are different, for each $i$. If \iBetti degrees are different and minimal, for each $i$, then their differences can not lie in $S$ as otherwise there would be $b_{i},b^{'}_{i}\in \B_i(S)$ with $b_{i}-b^{'}_{i}=s \in S\setminus \{0\}$, contradicting the minimality of $b_{i}$. Conversely, if 
$\pm (b_{i}-b^{'}_{i}) \notin S$ for all $b_{i},b^{'}_{i}\in \B_i(S)$ and for each $1\leq i \leq \pd(S)$, then all $b_{i}\in \B_i(S)$ are clearly minimal. They are also different as $S$ always contains $0$.
\end{proof}
When  $S$ is symmetric, it is sufficient to check  the condition above for  the first half of the indices. 
\begin{lemma} \label{symmetriclemma} 
If $S$ is symmetric, then $K[S]$ has a SIFRE if and only if $\pm (b_{i}-b^{'}_{i}) \notin S$ for all $b_{i},b^{'}_{i}\in \B_i(S)$ and for each $1\leq i \leq \lfloor \pd(S)/2 \rfloor$.
\end{lemma}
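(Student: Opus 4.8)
The plan is to exploit the self-duality of the minimal free resolution of a Gorenstein ring. Since $S$ is symmetric, $K[S]$ is Gorenstein, so, writing $p=\pd(S)$, we have $\beta_p=1$; let $a$ be the unique element of $\B_p(S)$, that is, $R^{\beta_p}=R[-a]$ with $a=s_{p,1}$. The classical self-duality of the resolution of a Gorenstein quotient asserts that applying $\operatorname{Hom}_R(-,R)$ to $\mathbf{F}$, reversing the arrows, and twisting produces again a minimal free resolution of $K[S]$, using that the canonical module $\omega_{K[S]}$ is isomorphic to $K[S]$ up to an $S$-graded shift recorded by $a$. The first step is to translate this into the $S$-graded equality of multisets
\[ \B_{p-i}(S)=\{\,a-b : b\in\B_i(S)\,\}=:a-\B_i(S) \]
for each $i$; taking $i=0$, where $\B_0(S)=\{0\}$, confirms that the shift is exactly $a=s_{p,1}$.

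Granting this symmetry, the second step is a one-line computation. Fix $i$ and let $c,c'\in\B_{p-i}(S)$ come from distinct basis elements; write $c=a-b$ and $c'=a-b'$ with $b,b'\in\B_i(S)$ occupying the correspondingly distinct positions. Then $c-c'=b'-b=-(b-b')$, so the set of signed differences $\{\pm(c-c')\}$ produced by $\B_{p-i}(S)$ equals the set $\{\pm(b-b')\}$ produced by $\B_i(S)$. Hence the condition of Lemma~\ref{indispensable} holds at the index $p-i$ if and only if it holds at the index $i$.

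It remains to carry out the index bookkeeping. By Lemma~\ref{indispensable}, $K[S]$ has a SIFRE if and only if $\pm(b_i-b_i')\notin S$ for all $b_i,b_i'\in\B_i(S)$ and all $1\le i\le p$. The index $i=p$ imposes nothing, since $\B_p(S)=\{a\}$ is a singleton with no two distinct Betti degrees to compare. For $1\le i\le p-1$ the involution $i\mapsto p-i$ permutes this range, pairing each index with one imposing, by the previous paragraph, an identical condition; when $p$ is even the value $i=p/2$ is its own partner. As $\{1,\dots,\lfloor p/2\rfloor\}$ meets every such pair, the condition for $1\le i\le\lfloor p/2\rfloor$ forces it for all $1\le i\le p$, while the reverse implication is immediate. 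Applying Lemma~\ref{indispensable} yields the assertion.

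The step I expect to be the main obstacle is making the multigraded self-duality $\B_{p-i}(S)=a-\B_i(S)$ precise: one must verify that the dualized, reversed, and twisted complex is not merely a resolution of $\omega_{K[S]}$ but, after identifying $\omega_{K[S]}$ with $K[S]$ via the appropriate $S$-graded shift, a \emph{minimal} free resolution of $K[S]$ whose $i$-th Betti degrees are precisely $a-\B_{p-i}(S)$. Once this is in place, the combinatorics above is purely formal.
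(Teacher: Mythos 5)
Your proposal is correct and rests on exactly the same mechanism as the paper's proof: the paper cites Lemma~21 of Barucci--Fr\"oberg--\c{S}ahin together with Stanley's symmetry of the minimal graded free resolution of a Gorenstein algebra, which is precisely the self-duality $\B_{p-i}(S)=a-\B_i(S)$ that you establish and then exploit via the involution $i\mapsto p-i$. You have simply written out in full the argument that the paper delegates to the cited reference.
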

\begin{proof} The proof of Lemma~21 in Barucci, Fr\"oberg, and \c{S}ahin's paper \cite[Lemma 21]{bafrsa} extends from numerical semigroups to arbitrary affine semigroups, as it uses the symmetry in the minimal graded free resolution of $K[S]$, which is true for any graded Gorenstein $K$-algebra by Stanley's second proof of Theorem $4.1.$ in \cite{sta}. 
\end{proof}

We finish this section by illustrating how this criterion applies.
\begin{example}\label{eg} Let $S=\langle 5\cdot31,5\cdot37,5\cdot41,82\cdot4,82\cdot5\rangle=\langle 155, 185, 205, 328, 410\rangle$. Macaulay 2 computes $I_S$ to be the following ideal

\begin{verbatim}
       2        5    2       9    2   2   5    4   7 3    4
I = ( x  - x , x  - x x x , x  - x x x , x  - x , x x  - x  ).
       3    5   2    1 3 5   1    2 3 5   4    5   1 2    5
\end{verbatim}
In order to check whether $S$ has a SIFRE we compute a minimal $S$-graded free resolution using the commands:
\begin{verbatim}
C = res I;
C.dd   
\end{verbatim}
and determine all the $i$-Betti $S$-degrees as follows. For instance, the following computes the set $\B_1(S)$ of $1$-Betti $S$-degrees:
\begin{verbatim}
i1 : B1=(degrees C.dd#1)_1
o1 = {410, 925, 1395, 1640, 1640}
\end{verbatim}

As there are two syzygies with the same Betti $S$-degree $1640$, and their diffference is $0\in S$, the semigroup $S$ can not have a SIFRE. 
 \end{example} 

\section{Gluing Strongly Indispensable Resolutions}
In this section, we study the concept of gluing introduced for the first time by Rosales \cite{ros}. 
 Let $S_1=\N\{\mathbf a_1,\dots,\mathbf a_m\}$ and $S_2=\N\{\mathbf b_1,\dots,\mathbf b_n\}$ be two affine semigroups. If there is an $\aa \in S_1 \cap S_2$ such that $\Z S_1 \cap \Z S_2=\Z \aa$ then $S=S_1+S_2$ is said to be the \textbf{gluing} of $S_1$ and $S_2$ by the virtue of \cite[Theorem 1.4 and Definition 2.1]{ros}. When $$  \aa=u_1\mathbf a_1+\cdots+u_m \mathbf a_m=v_1\mathbf b_1+\cdots+v_n \mathbf b_n,$$ the binomial $f_{\aa}=x_1^{u_1}\cdots x_m^{u_m}-y_1^{v_1}\cdots y_n^{v_n}$ has $S$-degree $\aa$ and the toric ideal is of the form $$I_S=I_{S_1}+I_{S_2}+\langle f_{\aa} \rangle \subset R=K[x_1,\dots, x_m, y_1, \dots, y_n].$$ Note that $f_{\aa}$ might not be unique as different $u_i$'s or $v_j$'s may appear in the expression of $\aa$ above. Let
$${\bf F}: \ 0\rightarrow F_k \stackrel{\phi_{k}}{\rightarrow}\cdots\stackrel{\phi_2}{\rightarrow}F_1\stackrel{\phi_1}{\rightarrow}F_0{\rightarrow}0 $$ be a 
minimal $S_1$-graded free resolution of $I_{S_1}$ with $H_0({\bf F})=R/I_{S_1}$,  $${\bf G}: \ 0\rightarrow G_l \stackrel{\Phi_{l}}{\rightarrow}\cdots\stackrel{\Phi_2}{\rightarrow}G_1\stackrel{\Phi_1}{\rightarrow}G_0{\rightarrow}0 $$  be a 
minimal $S_2$-graded free resolution of $I_{S_2}$ with $H_0({\bf G})=R/I_{S_2}$. 

Our aim is to compute a minimal $S$-graded free resolution of $I_{S}$ using the complexes $\bf F$ and $\bf G$. Since $I_S=I_{S_1}+I_{S_2}+\langle f_{\aa} \rangle$ the idea is to tensor these complexes and the complex below :
$${\bf C_{f_{\aa}}}:  0\rightarrow R \stackrel{f_{\aa}}{\longrightarrow}R \rightarrow 0 .$$ This method works if $f_{\aa}$ is a non-zero-divisor on $R/(I_{S_1}+I_{S_2})$ so we address it first. 

\begin{lemma} The gluing binomial $f_{\aa}$ is a non zerodivisor on $R/(I_{S_1}+I_{S_2})$. 
 \end{lemma}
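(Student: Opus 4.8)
The plan is to reduce the statement to the fact that the semigroup ring of an affine semigroup is an integral domain. The key structural observation is that $I_{S_1}$ involves only the variables $x_1,\dots,x_m$ while $I_{S_2}$ involves only $y_1,\dots,y_n$, so the two ideals live in disjoint sets of variables. Viewing $R=K[x]\otimes_K K[y]$ and using that $-\otimes_K-$ is exact over the field $K$, I would first identify
$$R/(I_{S_1}+I_{S_2})\;\cong\;\bigl(K[x]/I_{S_1}\bigr)\otimes_K\bigl(K[y]/I_{S_2}\bigr)\;\cong\;K[S_1]\otimes_K K[S_2].$$

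Next I would recognize this tensor product as a single semigroup ring: sending $\chi^{s_1}\otimes\chi^{s_2}$ to the basis element indexed by $(s_1,s_2)$ gives an isomorphism $K[S_1]\otimes_K K[S_2]\cong K[S_1\times S_2]$, where $S_1\times S_2$ is the product semigroup. Since $S_1$ and $S_2$ are affine, i.e.\ finitely generated cancellative torsion-free submonoids of some $\N^r$, their product $S_1\times S_2$ is again affine (it embeds in $\N^{2r}$), so $K[S_1\times S_2]$ embeds into the Laurent polynomial ring $K[\Z(S_1\times S_2)]$. In particular $R/(I_{S_1}+I_{S_2})$ is an integral domain.

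It then remains only to check that the image of $f_{\aa}$ in this domain is nonzero. Under the identification above, the monomial $x_1^{u_1}\cdots x_m^{u_m}$ maps to the basis element indexed by $(\aa,0)$ and $y_1^{v_1}\cdots y_n^{v_n}$ maps to the one indexed by $(0,\aa)$. As $\aa\neq 0$, these are two distinct basis elements of $K[S_1\times S_2]$, so $f_{\aa}$ maps to a nonzero element. A nonzero element of an integral domain is a non-zero-divisor, which completes the argument.

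The only genuinely delicate points, and where I would be most careful, are the two identifications: confirming that the kernel of the map $R\to K[S_1]\otimes_K K[S_2]$ is exactly $I_{S_1}+I_{S_2}$ (this is where flatness of $-\otimes_K-$ enters) and that $S_1\times S_2$ is cancellative and torsion-free, so that its semigroup ring really is a domain. Notably, neither step uses the full gluing hypothesis $\Z S_1\cap\Z S_2=\Z\aa$; only $\aa\neq 0$ is needed, precisely to guarantee that $f_{\aa}$ does not already vanish modulo $I_{S_1}+I_{S_2}$.
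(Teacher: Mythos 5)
Your proof is correct, but it takes a genuinely different route from the paper's. The paper argues combinatorially and from scratch: given $g$ with $gf_{\aa}\in I_{S_1}+I_{S_2}$, it expands $gf_{\aa}$, uses that $I_{S_1}+I_{S_2}$ is generated by pure-difference binomials involving only the $x$-variables or only the $y$-variables, and matches monomials pairwise so that the terms of $g$ can be rearranged into an algebraic combination of those binomials, concluding $g\in I_{S_1}+I_{S_2}$. You instead prove the stronger structural statement that $R/(I_{S_1}+I_{S_2})\cong K[S_1]\otimes_K K[S_2]\cong K[S_1\times S_2]$ is an integral domain --- so $I_{S_1}+I_{S_2}$ is in fact prime, and is itself the toric ideal of the product semigroup $S_1\times S_2$ --- after which one only needs that $f_{\aa}$ has nonzero image, which holds since $(\aa,0)\neq(0,\aa)$. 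Each approach has its merits: the paper's is elementary and self-contained (though its final rearrangement step is sketched rather than carried out in full), while yours rests on two standard identifications, is easier to verify rigorously, and yields the primality of $I_{S_1}+I_{S_2}$ as a bonus, which is conceptually consistent with the paper's Theorem \ref{res} where ${\bf F\otimes G}$ resolves this quotient. Both proofs hinge on the same key observation, namely that $I_{S_1}$ and $I_{S_2}$ live in disjoint sets of variables, and as you correctly note, neither uses the lattice condition $\Z S_1\cap\Z S_2=\Z\aa$ beyond $\aa\neq 0$; that nonvanishing is implicit in the definition of gluing (otherwise $f_{\aa}$ would be the zero polynomial and the statement would fail), so flagging it is appropriate rather than a gap. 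One minor simplification: in your first identification, flatness of $-\otimes_K-$ is not actually needed; right-exactness alone gives that the kernel of $R\to \bigl(K[x]/I_{S_1}\bigr)\otimes_K\bigl(K[y]/I_{S_2}\bigr)$ is the sum of the extended ideals.
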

 \begin{proof}
 For notational convenience, let $f_{\aa}=\x^{\u}-\y^{\v}=x_1^{u_1}\cdots x_m^{u_m}-y_1^{v_1}\cdots y_n^{v_n}$. Take an element $g=\sum_{\z,\w}c_{\z,\w}\x^{\z}\y^{\w}\in R$ with $gf_{\aa}\in I_{S_1}+I_{S_2}$. As $I_{S_1}+I_{S_2}$ is generated by binomials of the form $\x^{\z}-\x^{\z'}$ and $\y^{\w}-\y^{\w'}$, these binomials appear in the expansion of $gf_{\aa}=\sum_{\z,\w}c_{\z,\w}(\x^{\z+\u}\y^{\w}-\x^{\z}\y^{\w+\v})$. In other words, each monomial $\x^{\z+\u}\y^{\w}$ has a match of type $\x^{\z'+\u}\y^{\w'}$ or $\x^{\z'}\y^{\w'+\v}$ such that 
 $$\x^{\z+\u}\y^{\w}-\x^{\z'+\u}\y^{\w'} \quad \mbox{or}\quad \x^{\z+\u}\y^{\w}-\x^{\z'}\y^{\w'+\v}$$
  is divisible by one of the binomials $\x^{\z}-\x^{\z'}$ or $\y^{\w}-\y^{\w'}$. In the first case, this is possible only if $\z=\z'$ or  $\w=\w'$. If $\z=\z'$, then 
 $$\x^{\z+\u}\y^{\w}-\x^{\z'+\u}\y^{\w'}=\x^{\z+\u}(\y^{\w}-\y^{\w'})=\x^{\u}(\x^{\z}\y^{\w}-\x^{\z}\y^{\w'}).$$ This means that the term $\x^{\z}\y^{\w}$ of $g$ has a match $\x^{\z}\y^{\w'}$ such that $\x^{\z}\y^{\w}-\x^{\z}\y^{\w'}$ is divisible by $\y^{\w}-\y^{\w'}$. Similarly, one can prove that this happens for the other cases. Hence, terms in $g$ may be rearranged so that it is an algebraic combination of binomials $\x^{\z}-\x^{\z'}$ and $\y^{\w}-\y^{\w'}$, that is, $g\in I_{S_1}+I_{S_2}$.
 \end{proof}
 
We are now ready to prove the following key result.
 
 \begin{theorem}\label{res} Let $S$ be the gluing of $S_1$ and $S_2$. If ${\bf F}$ is a 
 minimal $S_1$-graded free resolution of $I_{S_1}$ and ${\bf G}$ is a minimal $S_2$-graded free resolution of $I_{S_2}$, then ${\bf   C_{f_{\aa}} \otimes F \otimes G}$ is aminimal $S$-graded free resolution of $I_S$.
 \end{theorem}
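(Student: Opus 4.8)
The plan is to assemble the resolution in two stages. Write $A=K[x_1,\dots,x_m]$ and $B=K[y_1,\dots,y_n]$, so that $R=A\otimes_K B$, and observe that $I_{S_1}$ lives in the variables $\x$ while $I_{S_2}$ lives in the variables $\y$. Accordingly $\mathbf{F}$ is the extension to $R$ of a minimal $A$-free resolution $\mathbf{F}'$ of $A/I_{S_1}$, and $\mathbf{G}$ is the extension to $R$ of a minimal $B$-free resolution of $B/I_{S_2}$. The first step is to show that $\mathbf{F}\otimes_R\mathbf{G}$ is a free resolution of $R/(I_{S_1}+I_{S_2})$.

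For exactness I would exploit the disjointness of the two variable sets. Since $R/I_{S_2}=A\otimes_K(B/I_{S_2})$ is a free, hence flat, $A$-module, tensoring the acyclic $A$-complex $\mathbf{F}'$ with $R/I_{S_2}$ preserves exactness; equivalently $\operatorname{Tor}^R_i(R/I_{S_1},R/I_{S_2})=0$ for all $i>0$, since $H_i(\mathbf{F}\otimes_R R/I_{S_2})=H_i(\mathbf{F}'\otimes_A R/I_{S_2})$. By the standard criterion for when the tensor product of two resolutions is again a resolution, $\mathbf{F}\otimes_R\mathbf{G}$ is then acyclic in positive degrees with $H_0=R/I_{S_1}\otimes_R R/I_{S_2}=R/(I_{S_1}+I_{S_2})$.

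Next I would adjoin $f_{\aa}$. By the lemma just proved, $f_{\aa}$ is a non-zerodivisor on $M:=R/(I_{S_1}+I_{S_2})$. Tensoring the resolution $\mathbf{F}\otimes_R\mathbf{G}$ of $M$ with $\mathbf{C}_{f_{\aa}}$ produces exactly the mapping cone of multiplication by $f_{\aa}$ on $\mathbf{F}\otimes_R\mathbf{G}$. Reading off its homology from the long exact sequence of the cone, acyclicity of $\mathbf{F}\otimes_R\mathbf{G}$ kills $H_i$ for $i\ge 2$, injectivity of $f_{\aa}$ on $M$ kills $H_1$, and $H_0=M/f_{\aa}M=R/(I_{S_1}+I_{S_2}+\langle f_{\aa}\rangle)=R/I_S$. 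Hence $\mathbf{C}_{f_{\aa}}\otimes\mathbf{F}\otimes\mathbf{G}$ is a free resolution of $R/I_S$.

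It remains to check minimality and the $S$-grading. The differentials of the triple tensor product are built, up to sign, from the differentials of $\mathbf{F}$ and $\mathbf{G}$ together with multiplication by $f_{\aa}$; since $\mathbf{F}$ and $\mathbf{G}$ are minimal all of their entries lie in $\mathfrak{m}=(x_1,\dots,x_m,y_1,\dots,y_n)$, and $f_{\aa}=\x^{\u}-\y^{\v}\in\mathfrak{m}$ as well, so every entry of every differential lies in $\mathfrak{m}$ and the resolution is minimal. For the grading, each variable and $f_{\aa}$ is $S$-homogeneous, using $S_1,S_2\subseteq S$ to make the $S_1$- and $S_2$-gradings restrictions of the $S$-grading, so the whole complex is $S$-graded. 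The main obstacle is the exactness of $\mathbf{F}\otimes_R\mathbf{G}$, i.e. the vanishing of the higher $\operatorname{Tor}$; once the flatness argument for the disjoint variable sets is in place, the remaining steps are formal homological algebra.
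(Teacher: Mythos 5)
Your proof is correct, and its two-stage skeleton is the same as the paper's: first show that ${\bf F}\otimes{\bf G}$ is an $S$-graded minimal free resolution of $R/(I_{S_1}+I_{S_2})$, then tensor with ${\bf C}_{f_{\aa}}$ and use the non-zerodivisor lemma to kill the extra homology. The genuine difference is in how the crucial vanishing $H_i({\bf F}\otimes{\bf G})=0$ for $i>0$ is established. The paper reduces, as you do, to the complex ${\bf F}\otimes R/I_{S_2}$, but then argues by an explicit description of $\Ker(\Delta_i)$ and $\im(\Delta_i)$, using that $\im(\phi_i)$ involves only the $x$-variables while $I_{S_2}$ involves only the $y$-variables, so that $\phi_i^{-1}(\bigoplus_j I_{S_2})=0$; this hands-on computation is somewhat delicate (the paper's displayed formula for $\Ker(\Delta_i)$ as a union of two submodules is in fact loosely stated). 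You instead package the same disjointness-of-variables phenomenon as a flatness statement: writing $R=A\otimes_K B$ with $A=K[x_1,\dots,x_m]$, the module $R/I_{S_2}\cong A\otimes_K (B/I_{S_2})$ is free, hence flat, over $A$, so $\operatorname{Tor}^R_i(R/I_{S_1},R/I_{S_2})=H_i({\bf F}'\otimes_A R/I_{S_2})=0$ for $i>0$, where ${\bf F}'$ is an $A$-free resolution of $A/I_{S_1}$. This is cleaner, rests only on standard homological algebra, and avoids the kernel computation entirely. Your second stage, reading the homology of the mapping cone from its long exact sequence, is equivalent to the paper's direct computation of $H_1$ and $H_0$ of ${\bf C}_{f_{\aa}}\otimes R/(I_{S_1}+I_{S_2})$. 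Finally, you explicitly verify minimality (every entry of every differential, including $\pm f_{\aa}$, lies in the irrelevant maximal ideal) and the compatibility of the $S_1$- and $S_2$-gradings with the $S$-grading; the paper leaves both points implicit, and since the theorem asserts a \emph{minimal} $S$-graded resolution, spelling them out is a genuine improvement.
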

 
 \begin{proof}Recall that the tensor product of $\bf F$ and $\bf G$ is a complex
$${\bf F \otimes G}: \ 0\longrightarrow  F_k \otimes G_l \stackrel{\delta_{k+l}}{\longrightarrow}\cdots\stackrel{\delta_2}{\longrightarrow} F_1\otimes G_0 \oplus F_0\otimes G_1 \stackrel{\delta_1}{\longrightarrow}F_0\otimes G_0 {\longrightarrow}0 $$
with terms $ ({\bf F \otimes G})_i= \oplus_{p+q=i}^{} F_p \otimes G_q$ and maps given by $$\delta_i(\sum_{p+q=i} a_p\otimes b_q)=\sum_{p+q=i}\phi_p(a_p)\otimes b_q+(-1)^p a_p\otimes \Phi_q (b_q).$$
It is well known that $H_i({\bf F \otimes G})=H_i({\bf F \otimes R/I_{S_2}})$ where ${\bf F \otimes R/I_{S_2}}$ is the complex $$\ 0\rightarrow F_k\otimes R/I_{S_2} \stackrel{\Delta_{k}}{\rightarrow}\cdots\stackrel{\Delta_2}{\rightarrow}F_1\otimes R/I_{S_2} \stackrel{\Delta_1}{\rightarrow}F_0\otimes R/I_{S_2} {\rightarrow}0 ,$$ with $\Delta_i(a_i\otimes b)=\phi_i(a_i)\otimes b$. It is easy to see that $$\displaystyle \Ker(\Delta_i)=[\Ker(\phi_i)\otimes R/I_{S_2}] \cup [(\phi_i^{-1}(\bigoplus_{j=1}^{r_{i-1}}I_{S_2})\otimes R/I_{S_2}],$$
where $r_{i-1}=\rank(F_{i-1})$, and $\im (\Delta_i)=\im(\phi_i)\otimes R/I_{S_2}$. Since, $\im(\phi_i)$ involves the variables $x_j$ only and $I_{S_2}$ involves the variables $y_j$ only, it follows that $\phi_i^{-1}(\oplus_{j=1}^{r_{i-1}}I_{S_2})=\{0\}.$ Thus, $H_i({\bf F \otimes G})=H_i({\bf F \otimes R/I_{S_2}})=0$, for all $i>0$. Since $H_0({\bf F \otimes G})= R/I_{S_1} \otimes R/I_{S_2} \cong R/(I_{S_1}+I_{S_2})$, it follows that ${\bf F \otimes G}$ is an $S$-graded minimal free resolution of $R/(I_{S_1}+I_{S_2})$.

Now let $f=f_{\aa}$ for notational convenience. As before, 
$$H_i({\bf   C_f \otimes F \otimes G})=H_i({\bf C_f \otimes R/(I_{S_1}+I_{S_2})}), \quad \mbox{where} \quad {\bf C_f \otimes R/(I_{S_1}+I_{S_2})}\quad \mbox{is}$$  
 $$0\rightarrow R \otimes R/(I_{S_1}+I_{S_2}) \stackrel{ f \otimes 1}{\longrightarrow}R \otimes R/(I_{S_1}+I_{S_2})\rightarrow 0.$$
 Note that $H_1({\bf C_f \otimes R/(I_{S_1}+I_{S_2})})=((I_{S_1}+I_{S_2}):f)\otimes R/(I_{S_1}+I_{S_2})=\{0\}$ as $f$ is a non-zero-divisor on $R/(I_{S_1}+I_{S_2})$. Since we have the following isomorphism $$H_0({\bf C_f \otimes R/(I_{S_1}+I_{S_2})})\cong R/(I_{S_1}+I_{S_2}+\langle f \rangle ),$$ it follows that ${\bf   C_f \otimes F \otimes G}$ gives an $S$-graded minimal free resolution of $I_S$.
 \end{proof}
\begin{remark} As we were preparing the final version for submission, a slightly different version of the theorem above is posted on arxiv by Gimenez and Srinivasan \cite{gs}. See also our preprint posted on arxiv at https://arxiv.org/abs/1710.09298. 
\end{remark}
 
Recall that $\B_i(S)$ is the set of \iBetti $S$-degrees of a minimal free resolution of $K[S]$ for every $1\leq i \leq \pd(S)$ and $\B_i(S)=\{0\}$ otherwise. 
 
\begin{lemma} \label{betti} Let $S$ be the gluing of $S_1$ and $S_2$. Then,
$$\B_{i}(S)=\left[ \bigcup_{p+q=i}^{} \B_{p}(S_1)+\B_{q}(S_2)\right]\cup \left[ \bigcup_{p+q=i-1}^{} \B_{p}(S_1)+\B_{q}(S_2)+\{\aa\} \right].$$

\end{lemma}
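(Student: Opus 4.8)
The plan is to read off the Betti $S$-degrees directly from the explicit minimal free resolution $\mathbf{C}_{f_\aa}\otimes\mathbf{F}\otimes\mathbf{G}$ constructed in Theorem~\ref{res}. Since that complex is minimal and $S$-graded, the $i$-Betti $S$-degrees of $K[S]$ are exactly the $S$-degrees of the free summands appearing in homological degree $i$, so the entire computation reduces to bookkeeping of shifts under the tensor product.

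First I would record how $S$-degrees shift under $\otimes$. If $F_p=\bigoplus R[-s]$ with the shifts $s$ ranging over (a multiset refining) $\B_p(S_1)$, and similarly $G_q$ carries the shifts in $\B_q(S_2)$, then a summand $R[-s]\otimes R[-s']$ of $F_p\otimes G_q$ is the free module $R[-(s+s')]$, so its $S$-degree is $s+s'$. Hence the degrees occurring in $(\mathbf{F}\otimes\mathbf{G})_i=\bigoplus_{p+q=i}F_p\otimes G_q$ are precisely $\bigcup_{p+q=i}\bigl(\B_p(S_1)+\B_q(S_2)\bigr)$, where $A+B$ denotes the Minkowski-type sumset of $S$-degrees. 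This gives the first bracket of the claimed formula and simultaneously identifies $\B_i$ of the intermediate ring $R/(I_{S_1}+I_{S_2})$ resolved by $\mathbf{F}\otimes\mathbf{G}$.

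Next I would tensor with $\mathbf{C}_{f_\aa}\colon 0\to R\xrightarrow{f_\aa}R\to 0$. The free module in homological degree $1$ of $\mathbf{C}_{f_\aa}$ is $R[-\aa]$, since $f_\aa$ has $S$-degree $\aa$; the degree-$0$ term is $R[0]$. Therefore $(\mathbf{C}_{f_\aa}\otimes(\mathbf{F}\otimes\mathbf{G}))_i=(\mathbf{F}\otimes\mathbf{G})_i\ \oplus\ (\mathbf{F}\otimes\mathbf{G})_{i-1}[-\aa]$. The first summand contributes the degrees $\bigcup_{p+q=i}\bigl(\B_p(S_1)+\B_q(S_2)\bigr)$ already computed, while the second shifts the degree-$(i-1)$ part by $\aa$, contributing $\bigcup_{p+q=i-1}\bigl(\B_p(S_1)+\B_q(S_2)+\{\aa\}\bigr)$. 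Assembling the two gives exactly the asserted decomposition of $\B_i(S)$, and the convention $\B_p=\{0\}$ for $p$ outside the range $[1,\pd]$ handles the boundary indices $i=0,1$ and $i=\pd(S)$ uniformly.

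The only genuine point requiring care—rather than a true obstacle—is the minimality claim: one must know that the tensor-product complex is \emph{minimal}, so that no cancellation occurs and the free summands really do compute the Betti numbers. This is furnished by Theorem~\ref{res}, which asserts minimality of $\mathbf{C}_{f_\aa}\otimes\mathbf{F}\otimes\mathbf{G}$; granting it, there are no spurious splittings, and the multiset of shifts in each homological degree is genuinely the list of $i$-Betti $S$-degrees. A secondary subtlety is that the two brackets in the union may share elements and that degrees may repeat within a bracket; since $\B_i(S)$ is explicitly allowed to be a nonstandard multiset (as noted after the definition of $i$-Betti degrees), I would keep these repetitions and treat $\cup$ and $+$ at the level of multisets, which is precisely what the free-module decomposition produces.
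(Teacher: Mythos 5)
Your proposal is correct and follows essentially the same route as the paper: both read off the $i$-Betti $S$-degrees from the minimal $S$-graded resolution $\mathbf{C}_{f_{\aa}}\otimes\mathbf{F}\otimes\mathbf{G}$ of Theorem~\ref{res}, decomposing its $i$-th term as $\bigoplus_{p+q=i} R\otimes F_p\otimes G_q \oplus \bigoplus_{p+q=i-1} R(-\aa)\otimes F_p\otimes G_q$ and using that the shifts in $F_p\otimes G_q$ realize $\B_p(S_1)+\B_q(S_2)$. Your added remarks on minimality (no cancellation) and the multiset convention are implicit in the paper's shorter argument but are the same underlying justification.
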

\begin{proof} By Theorem \ref{res}, ${\bf   C_{f_{\aa}} \otimes F \otimes G}$ is an $S$-graded minimal free resolution of $I_S$. Hence, the proof follows from the following
$$(C_{f_{\aa}} \otimes F \otimes G)_i=\bigoplus_{p+q=i}^{} R \otimes F_p \otimes G_q+\bigoplus_{p+q=i-1}R(-\aa) \otimes F_p \otimes G_q, $$
since $S$-degrees of elements in $F_p \otimes G_q$ constitute the set $\B_{p}(S_1)+\B_{q}(S_2)$.
\end{proof}

We use the following simple observation in the proof of our main result.

\begin{lemma} \label{dif} Let $S$ be the gluing of $S_1$ and $S_2$. Fix  $j \in\{1,2\}$, and $b,b' \in S_j$. Then, $b-b'\in S_j \iff b-b'\in S$.
\end{lemma}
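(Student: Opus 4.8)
The plan is to dispatch the easy implication first and then concentrate on the substance. The forward implication $b-b'\in S_j \Rightarrow b-b'\in S$ is immediate: since every affine semigroup contains $0$, we have $S_j \subseteq S_1+S_2 = S$, so any element of $S_j$ already lies in $S$.

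For the converse, by symmetry I would fix $j=1$ and treat the case $j=2$ identically, with the roles of $S_1$ and $S_2$ interchanged. Set $d = b-b'$ with $b,b'\in S_1$, and assume $d \in S$. Writing $d = s_1+s_2$ with $s_1\in S_1$ and $s_2\in S_2$, the idea is to show that $s_2$ is forced to lie in $S_1$ as well, after which $d = s_1+s_2 \in S_1 = S_j$ follows at once. First I would observe that $d = b-b' \in \Z S_1$, because $b,b'\in S_1$. Consequently $s_2 = d - s_1 \in \Z S_1$, while trivially $s_2 \in S_2 \subseteq \Z S_2$. Invoking the defining property of the gluing, $\Z S_1 \cap \Z S_2 = \Z\aa$, this places $s_2 = k\aa$ for some integer $k$.

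The key step is to upgrade this lattice membership to genuine semigroup membership, i.e. to show $k\ge 0$. Here I would exploit that all elements live in $\N^r$, so that both $s_2$ and $\aa$ have non-negative coordinates. If $\aa = 0$, then $s_2 = 0 \in S_1$ directly; otherwise $\aa$ has some strictly positive coordinate, and comparing that coordinate on the two sides of $s_2 = k\aa$ forces $k\ge 0$. Since $\aa \in S_1$ and $S_1$ is closed under addition, $k\ge 0$ gives $s_2 = k\aa \in S_1$, completing the argument.

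I expect the only genuine obstacle to be the positivity step $k \ge 0$: the gluing hypothesis only controls the \emph{lattice} intersection $\Z S_1 \cap \Z S_2$, so without the ambient non-negativity of $\N^r$ one could not rule out a negative multiple of $\aa$. Everything else is bookkeeping, and the same computation handles $j=2$ verbatim.
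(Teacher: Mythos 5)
Your proof is correct and takes essentially the same route as the paper's: decompose $b-b'=s_1+s_2$ with $s_i\in S_i$, observe $s_2\in \Z S_1\cap\Z S_2=\Z\aa$, and conclude $b-b'=s_1+k\aa\in S_1$ since $\aa\in S_1$. You are in fact slightly more careful than the paper, which simply asserts that $k$ is positive, whereas you justify $k\ge 0$ by comparing coordinates inside $\N^r$ (an argument equivalent to the pointedness $S_2\cap(-S_2)=\{0\}$ that the paper invokes elsewhere).
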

\begin{proof} Without loss of generality, assume that $j=1$.
As $S_1 \subset S$, $b-b'\in S_1 \Rightarrow b-b'\in S$. For the converse, take $b,b' \in S_1$ with $b-b'\in S$. Then, $b-b'=s_1+s_2$, for some $s_1\in S_1$ and $s_2\in S_2$, since $S=S_1+S_2$. So, $s_2=b-b'-s_1\in \Z S_1$. Since $\Z S_1 \cap \Z S_2=\Z \aa$ and $s_2\in S_2$, we have $s_2=k\aa$ for a positive integer $k$. Hence,  $b-b'=s_1+k\aa \in S_1$.
\end{proof}
 We are now ready to prove our main result which gives a practical method to produce infinitely many affine semigroups having a SIFRE.
\begin{theorem}\label{gluing} Let $b_{i,j}$ denote an element of $ \B_{i}(S_j)$ for $i=1,\dots, pd (S_j)$, $j=1,2$. Then, $I_S$ has a SIFRE if and only if $I_{S_1}$ and $I_{S_2}$ have SIFREs and the following hold
\begin{enumerate} 
\item $\pm (\aa+b_{i-1,j}-b_{i,j}) \notin S_j$, \label{cond1}
\item $\pm (b_{p,1}+b_{q,2}-b'_{r,1}-b'_{s,2}) \notin S$, for $p-r \geq 2$, where $p+q=i=r+s$, \label{cond2}
\item $\pm (b_{p,1}+b_{q,2}-b'_{r,1}-b'_{s,2}-\aa) \notin S$ for $p-r \geq 2$, where $p+q=i=r+s+1$. \label{cond3}
\end{enumerate}

\end{theorem}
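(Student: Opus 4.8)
The plan is to derive the characterization directly from Lemma~\ref{indispensable} applied to $S$, using the explicit description of the Betti degrees $\B_i(S)$ furnished by Lemma~\ref{betti}. By Lemma~\ref{indispensable}, $I_S$ has a SIFRE if and only if for every $i$ and every pair of elements $\beta,\beta' \in \B_i(S)$ we have $\pm(\beta-\beta') \notin S$. So the entire proof reduces to taking two arbitrary elements of $\B_i(S)$, writing each out in one of the two forms dictated by Lemma~\ref{betti}, and translating the single master condition $\pm(\beta-\beta')\notin S$ into the list of conditions in the statement. Each of $\beta,\beta'$ is of the form $b_{p,1}+b_{q,2}$ (with $p+q=i$) or $b_{p,1}+b_{q,2}+\aa$ (with $p+q=i-1$), so there are a handful of combinatorial cases to organize.

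\textbf{Organizing the cases.}
First I would treat the ``diagonal'' case where $\beta$ and $\beta'$ come from the same summand with the same split, i.e.\ $\beta=b_{p,1}+b_{q,2}$ and $\beta'=b'_{p,1}+b'_{q,2}$ with the same $(p,q)$; here $\beta-\beta'=(b_{p,1}-b'_{p,1})+(b_{q,2}-b'_{q,2})$, and using Lemma~\ref{dif} together with the fact that $\Z S_1\cap\Z S_2=\Z\aa$ one sees this difference lies in $S$ precisely when the corresponding differences are controlled in each $S_j$ — this is exactly the content of requiring $I_{S_1}$ and $I_{S_2}$ to have SIFREs. Next, the case where one element carries an extra $\aa$ and the other does not, but the $S_1$- and $S_2$-indices otherwise match, produces condition~\eqref{cond1}: after cancellation one is left with $\pm(\aa+b_{p,1}+b_{q,2}-b'_{p,1}-b'_{q,2})$, and reducing to a single factor via Lemma~\ref{dif} gives $\pm(\aa+b_{i-1,j}-b_{i,j})\notin S_j$. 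Finally, the genuinely mixed cases — where the $S_1$-homological degrees differ, forcing $p-r\neq 0$ — give conditions~\eqref{cond2} and~\eqref{cond3}, the former when neither or both elements carry $\aa$ (so $p+q=r+s=i$), the latter when exactly one carries $\aa$ (so $p+q=r+s+1$).

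\textbf{Reducing the index range.}
A point that needs care is why the mixed conditions are stated only for $p-r\geq 2$ rather than for all $p\neq r$. The reason is symmetry: swapping the roles of $\beta$ and $\beta'$ replaces $p-r$ by $r-p$, so one may assume $p-r\geq 0$ without loss, and the $\pm$ in front of each condition absorbs this. The case $p-r=0$ is already handled by the diagonal/SIFRE analysis, and I expect the case $p-r=1$ to be subsumed — it should reduce, after splitting off an $\aa$, to an instance of condition~\eqref{cond1} or to the SIFRE hypotheses on the factors, rather than requiring a genuinely new inequality. Verifying that $p-r=1$ does not contribute a separate condition is where I expect the bookkeeping to be most delicate, since it requires matching up the $S_2$-degrees $q,s$ correctly and invoking Lemma~\ref{dif} to push the reduced difference back into the appropriate $S_j$.

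\textbf{Expected main obstacle.}
The hard part will not be any single algebraic manipulation — each reduction uses only Lemma~\ref{dif} and the gluing relation $\Z S_1\cap\Z S_2=\Z\aa$ — but rather the \emph{exhaustiveness and non-redundancy} of the case analysis: showing that the three listed conditions, together with the SIFRE hypotheses on $S_1$ and $S_2$, are collectively equivalent to the blanket requirement $\pm(\beta-\beta')\notin S$ for all $\beta,\beta'\in\B_i(S)$ across all $i$, with no case omitted and no spurious extra condition generated. I would finish by confirming that every pairing of the two forms from Lemma~\ref{betti} falls into exactly one of the handled cases, so that the forward and backward implications both close.
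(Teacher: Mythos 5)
Your high-level strategy is indeed the same as the paper's: run Lemma~\ref{indispensable} for $S$ against the description of $\B_i(S)$ in Lemma~\ref{betti}, split pairs $b,b'\in\B_i(S)$ into the three cases (neither, both, or exactly one carrying the summand $\aa$), and use Lemma~\ref{dif} together with $\Z S_1\cap\Z S_2=\Z\aa$ to transfer differences into the factors. However, there are two genuine gaps. First, your reduction ``swapping $\beta$ and $\beta'$ replaces $p-r$ by $r-p$, so one may assume $p-r\ge 0$'' is valid only for condition~(\ref{cond2}), where the two elements range over the same sets. In the mixed case governing condition~(\ref{cond3}) the two elements are of different types (one carries $\aa$, the other does not), so renaming them changes nothing except the overall sign, which the $\pm$ already absorbs: $p$ is by definition the $S_1$-index of the element \emph{without} $\aa$, and the cases $p-r\le -1$ (equivalently, $S_2$-indices with $q-s\ge 2$) are not symmetric images of $p-r\ge 1$. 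These cases are covered neither by your symmetry claim nor by your $p-r\in\{0,1\}$ discussion; they are only reached by the instances of condition~(\ref{cond3}) with the roles of $S_1$ and $S_2$ interchanged (the paper's proof, which likewise treats only $p=r$ and $p=r+1$, has to be read with this symmetric interpretation of the conditions).

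Second, the cases $p-r\in\{0,1\}$, which you defer with ``I expect the case $p-r=1$ to be subsumed,'' are precisely where the paper's proof does its real work, and your claim that each reduction uses ``only Lemma~\ref{dif} and the gluing relation $\Z S_1\cap\Z S_2=\Z\aa$'' is not accurate. The paper writes $b-b'=s_1+s_2$ and extracts $b_{p,1}-b'_{r,1}-s_1=s_2-b_{q,2}+b'_{s,2}=k\aa$; the crux is then pinning down the sign of $k$. When $p=r+1$ in the setting of condition~(\ref{cond2}), this needs two ingredients absent from your proposal: the minimality and $S_2$-gradedness of the resolution of $I_{S_2}$, used to write $b'_{q+1,2}=b_{q,2}+s'_2$ with $s'_2\in S_2\setminus\{0\}$, and the pointedness $S_2\cap(-S_2)=\{0\}$, which together force $k>0$; only then does $b_{r+1,1}-b'_{r,1}-\aa=s_1+(k-1)\aa\in S_1$ contradict condition~(\ref{cond1}). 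Similarly, your assertion that $p-r=0$ ``is already handled by the diagonal/SIFRE analysis'' fails in the mixed case: there, one sign of $k$ contradicts the SIFRE hypothesis on a factor, but the other sign requires condition~(\ref{cond1}). So your plan is parallel to the paper's, but the decisive sign-pinning argument is missing, and without it the reduction does not close.
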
 
\begin{proof} Let us prove necessity first. 
By Lemma \ref{indispensable}, the differences between the elements in $\B_i(S)$ do not belong to $S$. Let $b_{i,j},b'_{i,j}\in \B_i(S_j)$, for $j=1,2$. By Lemma \ref{betti}, we have $\B_i(S_j) \subset \B_i(S)$, and thus $b_{i,j}-b'_{i,j}\notin S$. This implies $b_{i,j}-b'_{i,j}\notin S_j$ by Lemma \ref{dif}, which means that $I_{S_1}$ and $I_{S_2}$ have SIFRE by the virtue of Lemma \ref{indispensable}. As the elements in the Conditions (\ref{cond1})-(\ref{cond3}) are the differences of some elements in $\B_i(S)$, they do not belong to $S$. So, Conditions~(\ref{cond2}) and (\ref{cond3}) hold. Lemma \ref{dif} implies (\ref{cond1}) now.

Now let us prove sufficiency.  If $b,b' \in \B_{i}(S)$, then there are three possibilities due to Lemma \ref{betti}:\\ (\textbf{i}) $b,b' \in \B_{p}(S_1)+\B_{q}(S_2)$, for $p+q=i$,\\
(\textbf{ii}) $b,b' \in \B_{p}(S_1)+\B_{q}(S_2)+\aa$, for $p+q=i-1$,\\
(\textbf{iii}) $b\in \B_{p}(S_1)+\B_{q}(S_2)$, $b' \in \B_{r}(S_1)+\B_{s}(S_2)+\aa$, for $p+q=i=r+s+1$.

Case (\textbf{i}): Let $ b=b_{p,1}+b_{q,2}$ and $b'=b'_{r,1}+b'_{s,2}$ with $p+q=i=r+s$. Suppose now that $b-b'\in S$. Then,   $b-b'=b_{p,1}+b_{q,2}-b'_{r,1}-b'_{s,2}=s_1+s_2$, for some $s_1\in S_1$ and $s_2\in S_2$. Thus, $b_{p,1}-b'_{r,1}-s_1=s_2-b_{q,2}+b'_{s,2}=k\aa$ being an element of $ \Z S_1 \cap \Z S_2=\Z \aa$. By Condition (\ref{cond2}), we need only to check the difference for $p=r$ and $p=r+1$.

When $p=r$, it follows that $b_{p,1}-b'_{p,1}=s_1+k\aa\in S_1$ if $k \geq 0$, and that $b_{q,2}-b'_{q,2}=s_2+(-k)\aa\in S_2$ if $k<0$, contradicting to hypothesis by Lemma \ref{indispensable}. 

When $p=r+1$, it follows that $b_{r+1,1}-b'_{r,1}-s_1=s_2-b_{q,2}+b'_{q+1,2}=k\aa$. Since the resolution of $I_{S_2}$ is $S_2$-graded, there is $s'_2\in S_2$ such that $b'_{q+1,2}=b_{q,2}+s'_2$. So, $k\aa=s_2+s'_2\in S_2$. Since $S_2 \cap (-S_2)=\{0\}$, we have $k>0$. But then,
$b_{r+1,1}-b'_{r,1}-\aa=s_1+(k-1)\aa \in S_1$, which contradicts Condition $(\ref{cond1})$.

Case (\textbf{ii}): follows from Case (\textbf{i}).

Case (\textbf{iii}): Let $ b=b_{p,1}+b_{q,2}$ and $b'=b'_{r,1}+b'_{s,2}+\aa$ with $p+q=i=r+s+1$. Suppose that $b-b'\in S$. Then, $b-b'=b_{p,1}+b_{q,2}-b'_{r,1}-b'_{s,2}-\aa=s_1+s_2$, for some $s_1\in S_1$ and $s_2\in S_2$. It follows that $b_{p,1}-b'_{r,1}-s_1=s_2-b_{q,2}+b'_{s,2}+\aa=k\aa$, for some $k\in \Z$. By Condition (\ref{cond3}), we need only to check the difference for $p=r$ and $p=r+1$.

When $p=r$, we have $b_{r,1}-b'_{r,1}=s_1+k\aa\in S_1$ if $k > 0$, and $b_{s+1,2}-b'_{s,2}-\aa=s_2+(-k)\aa\in S_2$ if $k\leq0$, which give rise to a contradiction.

When $p=r+1$, we have $b_{r+1,1}-b'_{r,1}-\aa=s_1+(k-1)\aa\in S_1$ if $k > 0$, and $b_{s,2}-b'_{s,2}=s_2+(-k)\aa\in S_2$ if $k\leq0$, which give rise to a contradiction.

One proves that $b'-b\notin S$ similarly.
\end{proof}

\begin{remark} \label{numericalgluing} Let $S_1$ and
$S_2$ be two numerical semigroups minimally generated by the integers
$a_{1}<\dots<a_{m}$ and $b_{1}<\dots<b_{n}$ respectively. This implies that $\gcd(a_{1},\dots,a_{m})=\gcd(b_{1},\dots,b_{n})=1$. Take $a=u_1a_1+\dots+u_ma_m \in S_1$ and $b=v_1b_1+\dots+v_nb_n \in S_2$. Then, by \cite[Lemma 2.2]{ros}, the
numerical semigroup $S=\langle
ba_{1},\dots,ba_{m},ab_{1},\dots,ab_{n} \rangle$ is a gluing
of the semigroups $bS_1$ and $aS_2$ if and only if $gcd(a,b)=1$ with $a \not\in
\{a_{1},\dots,a_{m}\}$ and $b \not\in \{b_{1},\dots,b_{n}\}$ such that
$$\{ba_{1},\dots,ba_{m}\} \cap \{ab_{1},\dots,ab_{n}\}=\emptyset.$$
In this case, one needs to pay attention to the notation as $S$ is not a gluing of $S_1$ and $S_2$. For instance, one has to use $\B_p(bS_1)$ in Lemma \ref{betti} rather than $\B_p(S_1)$.
\end{remark}
The following illustrates the efficiency of our criterion special to semigroups obtained by gluing.
\begin{example}\label{eg}   $S_1=\langle 31,37,41\rangle$ and $S_2=\langle 4,5\rangle$ have SIFREs. Take $$a=u_1\cdot 31+u_2\cdot 37+u_3\cdot 41 \in S_1 \quad \mbox{and} \quad b=v_1\cdot4+v_2\cdot5 \in S_2.$$ Then, by Remark \ref{numericalgluing}, the semigroup $S=bS_1+aS_2$ is the gluing of $bS_1$ and $aS_2$ if and only if $gcd(a,b)=1$ with $a \not\in
\{31,37,41\}$ and $b \not\in \{4,5\}$. It is difficult, however, to determine $a$ and $b$ for which $S$ has a SIFRE using Lemma \ref{indispensable} as we explain now. Macaulay 2 computes the $i$-Betti $S_1$-degrees as follows:
\begin{verbatim}
i1 : B1S1=(degrees C1.dd#1)_1
o1 = {185,279,328}
i2 : B2S1=(degrees C1.dd#2)_1
o2 = {390,402}
\end{verbatim}
Clearly, the only Betti $S_2$-degree is $\B_1(S_2)=\{20\}$.
Therefore, using Lemma \ref{betti} we get the following sets:
\begin{itemize}
\item $\B_1(S)=\{185b,\quad 279b,\quad 328b,\quad 20a,\quad ab\},$ 
 \item $\B_2(S)=\{390b,\quad 402b,\quad 185b+20a,\quad 279b+20a,\quad 328b+20a,\\
 185b+ab,\quad 279b+ab,\quad 328b+ab,\quad 20a+ab\},$ 
 \item $\B_3(S)=\{390b+20a,\quad 402b+20a,\quad 390b+ab,\quad 402b+ab,\\
 185b+20a+ab,\quad 279b+20a+ab,\quad 328b+20a+ab\},$ 
 \item $\B_4(S)=\{390b+20a+ab,\quad 402b+20a+ab\}.$ 
\end{itemize}
For instance, there are $10$ positive differences of elements in $\B_1(S)$. Hence, Lemma \ref{indispensable} requires checking if $68$ elements do not lie in $S$ for every choice of $a$ and $b$. As the positive integers not is $S$ (also known as gaps of $S$) depend on $a$ and $b$, it is difficult to foresee which gluing will have a SIFRE. On the other hand, Conditions $(2)$ and $(3)$ of Theorem \ref{gluing} hold automatically and it is sufficient to check Condition $(1)$ only. This means to check if
\begin{itemize} 
 \item  $\pm(a-b_1)\notin S_1$, for $b_1\in \{185,279,328\}$
  \item $\pm(a+b_1-b_2)\notin S_1$, for $b_1\in \{185,279,328\}$ and  $b_2\in \{390,402\}$
   \item  $\pm(b-c_1)\notin S_2$, for $c_1\in \{20\}$.
\end{itemize}
 One can use gaps of $\langle 31,37,41\rangle$ to see only $a=109$ or $a=150$ yield a situation where the first two items above hold. 

\noindent One can use gaps $\{1,2,3,6,7,11\}$ of $\langle 4,5\rangle$ to see that the last bullet holds for any 
$$b\in Q=\{19,18,17,14,13,9,21,22,23,26,27,31\}.$$ 
The values when $a=109$ with any $b\in Q$ or  $a=150$ with $b=19,17,13,23,31$ produce gluings with SIFREs. Similarly one can see using our criterion that $\langle 6,7,10\rangle$ or $\langle 8,9,11\rangle$ does not give rise to a gluing with a SIFRE.
 \end{example}

\begin{remark} \label{generalization} This section generalizes the main results of \cite{go} as we briefly explain now. Our Lemma \ref{betti} specializes to $\B_{1}(S)= \B_{1}(S_1)\cup \B_{1}(S_2)\cup \{\aa\} $, which is exactly \cite[Theorem 10]{go}. Furthermore the condition (\ref{cond1}) in our Theorem \ref{gluing}, specializes to $\mp (\aa-b_{1,j}) \notin S_j$, since $b_{0,j}=0$, for $j=1,2$. This is exactly the condition in \cite[Theorem 12]{go} by the virtue of Lemma \ref{dif} and $\alpha \in S_1 \cap S_2$. One can produce gluings with unique presentations or equivalently unique minimal generating sets which do not have SIFREs. For example, take the gluing in Example \ref{eg} with $a=355$ and  
$$b\in Q=\{19,18,17,14,13,9,21,22,23,26,27,31\}.$$ Then, $S$ has a unique presentation as the following hold:
\begin{itemize} 
 \item  $\pm(a-b_1)\notin S_1$, for $b_1\in \{185,279,328\}$
   \item  $\pm(b-c_1)\notin S_2$, for $c_1\in \{20\}$.
\end{itemize}
On the other hand, we have seen in Example \ref{eg} that the following condition does not hold:
\begin{itemize} 
  \item $\pm(a+b_1-b_2)\notin S_1$, for $b_1\in \{185,279,328\}$ and  $b_2\in \{390,402\}$.
\end{itemize} 
\end{remark}

\section{Extending Strongly Indispensable Resolutions}
We determine some semigroups having SIFREs in this section. We focus on a particular case of gluing where the second semigroup is generated by a single element. These semigroups are also known as extensions in the literature. Given an affine semigroup $S$ generated minimally by $\m_1,\dots,\m_n$, recall that an {\it extension of $S$} is an affine semigroup denoted
by $E$ and generated minimally by
$\ell \m_1,\dots,\ell \m_n$ and $\m$, where $\ell$ is a positive integer coprime to a component of 
$\m=u_1\m_1+\dots+u_n\m_n$ for some non-negative integers $u_1,\dots,u_n$. Note that $E $ is the gluing of $S_1=\ell S$ and $S_2=\N \{\m\}$, with $\aa=\ell \m$.
\begin{theorem}\label{main1}  $K[E]$ has a SIFRE if and only if $K[S]$ has a SIFRE and the condition $\pm(\m+\mathbf{b'}-\mathbf{b})\notin S$ holds, for all $\mathbf{b} \in \B_{i}(S)$, $\mathbf{b'} \in \B_{i-1}(S)$, and $1\leq i \leq \pd(S)+1$.
\end{theorem}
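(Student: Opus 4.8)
The plan is to specialize the general gluing criterion in Theorem~\ref{gluing} to the extension setting, where $S_2=\N\{\m\}$ is generated by a single element and $\aa=\ell\m$. First I would observe that since $S_2$ is a free (polynomial) semigroup, its toric ideal $I_{S_2}$ is zero, so its minimal free resolution is trivial: we have $\pd(S_2)=0$ and hence $\B_q(S_2)=\{0\}$ for all $q\ge 1$, with $\B_0(S_2)=\{0\}$ as well. Consequently every index $q$ appearing in Lemma~\ref{betti} must be $0$, and the Betti formula collapses to a statement purely about $\B_p(S_1)=\B_p(\ell S)$ shifted by multiples of $\aa=\ell\m$. This immediately forces the two summands $b_{q,2},b'_{s,2}$ in Conditions~(\ref{cond2}) and (\ref{cond3}) of Theorem~\ref{gluing} to vanish, so the whole analysis reduces to the first semigroup $S_1=\ell S$.

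Next I would check that $I_{S_1}$ having a SIFRE is equivalent to $K[S]$ having a SIFRE. Here the key point is that $\ell S$ is just a rescaling of $S$: the map $s\mapsto \ell s$ is an isomorphism of $S$ onto $\ell S$ carrying $\B_i(S)$ bijectively onto $\B_i(\ell S)$, and it respects the partial order since $s-s'\in S\iff \ell(s-s')=\ell s-\ell s'\in \ell S$. Thus by Lemma~\ref{indispensable} the resolution condition $\pm(b_i-b_i')\notin \ell S$ for $\B_i(\ell S)$ holds exactly when $\pm(b_i-b_i')\notin S$ for $\B_i(S)$; that is, $K[S_1]$ has a SIFRE iff $K[S]$ does. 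Since $K[S_2]$ trivially has a SIFRE, the hypothesis ``$I_{S_1}$ and $I_{S_2}$ have SIFREs'' in Theorem~\ref{gluing} is equivalent to ``$K[S]$ has a SIFRE.''

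It then remains to translate Condition~(\ref{cond1}) and to confirm that Conditions~(\ref{cond2}) and (\ref{cond3}) hold automatically. For the latter, since the only surviving index in the second factor is $q=s=0$, the inequality $p-r\ge 2$ in (\ref{cond2}) and (\ref{cond3}) would require $p=i$ and $r=i$ (in (\ref{cond2})) or $r=i-1$ (in (\ref{cond3})), and in each case the difference $b_{p,1}+b_{q,2}-b_{r,1}'-b_{s,2}'$ reduces to a difference of two elements of $\B_i(\ell S)$, which already cannot lie in $S$ by the SIFRE hypothesis for $S$; more carefully one tracks that $p-r\ge 2$ with $q=s=0$ cannot actually occur for $S_2$-degrees, so these conditions are vacuous. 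The substantive content is Condition~(\ref{cond1}) for $j=1$, namely $\pm(\aa+b_{i-1,1}-b_{i,1})\notin S_1$. Writing $\aa=\ell\m$, $b_{i,1}=\ell\mathbf{b}$, $b_{i-1,1}=\ell\mathbf{b'}$ with $\mathbf{b}\in\B_i(S)$, $\mathbf{b'}\in\B_{i-1}(S)$, this reads $\pm\ell(\m+\mathbf{b'}-\mathbf{b})\notin \ell S$, which by the rescaling remark is equivalent to $\pm(\m+\mathbf{b'}-\mathbf{b})\notin S$. Running $i$ over $1\le i\le \pd(S_1)+1=\pd(S)+1$ (since $\pd(\ell S)=\pd(S)$ and the extra $+1$ accounts for the gluing factor $\mathbf{C}_{f_\aa}$ raising projective dimension by one) yields exactly the stated range.

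I expect the main obstacle to be the careful bookkeeping that Conditions~(\ref{cond2}) and (\ref{cond3}) are genuinely vacuous once $S_2$ is a single generator. The index constraint $p-r\ge 2$ combined with $q=s=0$ (and the parity/sum constraints $p+q=r+s$ or $p+q=r+s+1$) needs to be checked to ensure no nontrivial comparison survives; one must verify there is no edge case where a shift by $\aa$ reintroduces a comparison between Betti degrees at indices differing by at least two. Handling this indexing argument precisely, together with justifying that the $\B_q(S_2)=\{0\}$ degeneracy leaves only the clean rescaled Condition~(\ref{cond1}), is where the proof requires the most attention, though each individual step is elementary given Theorem~\ref{gluing} and the rescaling isomorphism.
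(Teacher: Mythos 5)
Your proposal is correct, but it reaches the statement by a different route than the paper. You derive Theorem~\ref{main1} as a formal corollary of the general gluing criterion, Theorem~\ref{gluing}, specialized to $S_1=\ell S$, $S_2=\N\{\m\}$, $\aa=\ell\m$, together with the rescaling isomorphism $S\cong \ell S$. The paper does not invoke Theorem~\ref{gluing} at all: it argues directly from Lemma~\ref{indispensable}, using Lemma~\ref{betti} to write $\B_i(E)=\ell \B_i(S)\cup \ell[\B_{i-1}(S)+\m]$, splitting a pair $\e,\e'\in\B_i(E)$ into the three possible cases, and transferring membership between $E$ and $S$ via Lemma~\ref{dif}. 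The content is the same, but the emphasis differs: your route buys economy of ideas (no case analysis is redone; one only checks that the hypotheses of the general theorem collapse), at the price of the index bookkeeping you flag, namely that Conditions~(\ref{cond2}) and (\ref{cond3}) are vacuous because $q=s=0$ together with $p+q=r+s$ or $p+q=r+s+1$ forces $p-r\in\{0,1\}$, never $p-r\geq 2$ --- which you verify correctly. The paper's direct route buys brevity: when $\pd(S_2)=0$ the three-case check is shorter than quoting and degenerating the general theorem.

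Two points in your write-up need tightening, though neither is a genuine gap (the paper glosses over both as well). First, the convention $\B_q(S_2)=\{0\}$ for $q\geq 1$ must not be substituted literally into Lemma~\ref{betti} or Theorem~\ref{gluing}: for $q\geq 1$ the free modules $G_q$ vanish, so those indices contribute nothing (the empty set), not the degree $0$; feeding in $\{0\}$ literally would wrongly re-introduce copies of $\B_{i-q}(S_1)$ into $\B_i(E)$. You implicitly use the correct empty-set reading when you set $q=s=0$, but your opening sentence states the other reading. Second, Condition~(\ref{cond1}) of Theorem~\ref{gluing} for $j=1$ only runs over $1\leq i\leq \pd(S_1)=\pd(S)$, so your derivation literally produces the condition in that range; the stated range $1\leq i\leq \pd(S)+1$ in Theorem~\ref{main1} is consistent with this only if $\B_{\pd(S)+1}(S)$ is read as empty, so that the top index imposes no condition (were it read as $\{0\}$, the case $i=\pd(S)+1$ would demand $\m+\mathbf{b'}\notin S$, which is always false since $\m,\mathbf{b'}\in S$). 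Making these two readings explicit would complete your argument.
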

\begin{proof} $K[E]$ has a SIFRE if and only if $\e-\e' \notin E$ for all $\e,\e' \in \B_i(E)$ by Lemma \ref{indispensable}. It follows from Lemma \ref{betti} that $\B_i(E)=\ell \B_i(S) \cup \ell[ \B_{i-1}(S)+ \m]$, and so we have three possibilities if $\e,\e' \in \B_i(E)$:
\begin{enumerate}
\item $\e,\e' \in \ell \B_i(S),$ 
 \item $ \e,\e' \in \ell[ \B_{i-1}(S)+ \m], $
 \item $\e\in \ell \B_i(S) \:\mbox{and} \: \e' \in \ell[ \B_{i-1}(S)+ \m].$
\end{enumerate}
In the first two cases $\e-\e'=\ell (\b-\b') \notin E$ if and only if $ \b-\b' \notin S$, by Lemma \ref{dif}, which is equivalent to $K[S]$ having a SIFRE by Lemma \ref{indispensable}. In the last one, $\pm(\e-\e')=\pm\ell (\b-\b'-\m) \notin E$ if and only if $ \pm(\b-\b'-\m) \notin S$, which completes the proof.
\end{proof}

\subsection{Symmetric affine semigroups} As another application, we obtain infinitely many complete intersection semigroup rings with a SIFRE. When $E$ is symmetric (or equivalently $S$ is symmetric), it is sufficient to check  the condition above for the first half of the indices.
\begin{corollary} \label{symmetriccorollary} If $E$ is symmetric, then $K[E]$ has a SIFRE if and only if $K[S]$ has a SIFRE and $\pm(\m+\mathbf{b'}-\mathbf{b})\notin S$, for $\mathbf{b} \in \B_{i}(S)$, $\mathbf{b'} \in \B_{i-1}(S)$, and $1\leq i \leq \lfloor \pd(E)/2 \rfloor$.
\end{corollary}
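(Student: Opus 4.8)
The plan is to combine the symmetric reduction of Lemma~\ref{symmetriclemma} with the level-by-level analysis already carried out in the proof of Theorem~\ref{main1}. First I would record the structural facts about the extension $E$ that make both ingredients available. Since $E$ is the gluing of $\ell S$ and $\N\{\m\}$ and the second factor is free (so $I_{S_2}=0$ and its resolution $\bf G$ has length $0$), the resolution ${\bf C_{f_{\aa}}\otimes F\otimes G}$ of Theorem~\ref{res} has length $\pd(S)+0+1$, whence $\pd(E)=\pd(S)+1$; this is what guarantees that the range $1\le i\le\lfloor\pd(E)/2\rfloor$ is contained in the range $1\le i\le\pd(S)+1$ of Theorem~\ref{main1}. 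Moreover, as observed just before the statement, $E$ is symmetric if and only if $S$ is, so Lemma~\ref{symmetriclemma} may legitimately be applied to $E$.

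The forward direction is then immediate: if $K[E]$ has a SIFRE, Theorem~\ref{main1} already yields that $K[S]$ has a SIFRE and that $\pm(\m+\mathbf{b'}-\mathbf{b})\notin S$ for every $1\le i\le\pd(S)+1$, in particular for the smaller range $1\le i\le\lfloor\pd(E)/2\rfloor$ demanded here.

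For the converse I would invoke Lemma~\ref{symmetriclemma} for the symmetric semigroup $E$: it suffices to verify $\pm(\e-\e')\notin E$ for all $\e,\e'\in\B_i(E)$ and each $1\le i\le\lfloor\pd(E)/2\rfloor$. Fixing such an $i$ and writing $\B_i(E)=\ell\B_i(S)\cup\ell[\B_{i-1}(S)+\m]$ via Lemma~\ref{betti}, I would split into the same three cases as in the proof of Theorem~\ref{main1}. In the first two cases the difference equals $\ell(\mathbf{b}-\mathbf{b'})$ with $\mathbf{b},\mathbf{b'}$ both in $\B_i(S)$ or both in $\B_{i-1}(S)$, so it avoids $E$ precisely because $K[S]$ has a SIFRE, using Lemma~\ref{dif} to transfer from $E$ to $S$ and then Lemma~\ref{indispensable}. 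In the third case the difference is $\pm\ell(\mathbf{b}-\mathbf{b'}-\m)$ with $\mathbf{b}\in\B_i(S)$ and $\mathbf{b'}\in\B_{i-1}(S)$, and $\pm(\mathbf{b}-\mathbf{b'}-\m)=\mp(\m+\mathbf{b'}-\mathbf{b})$ describes the same set, so it avoids $E$ exactly by the hypothesis $\pm(\m+\mathbf{b'}-\mathbf{b})\notin S$ for this range of $i$, again through Lemma~\ref{dif}.

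The only delicate point, and the step I expect to be the mildest obstacle, is the index bookkeeping: I must confirm that the reduction of Lemma~\ref{symmetriclemma} for $E$ invokes only $\B_i(S)$ and $\B_{i-1}(S)$ with $i\le\lfloor\pd(E)/2\rfloor$, so that the hypothesis stated for exactly this range feeds the third case, while the full SIFRE of $S$ (assumed outright) supplies the level-wise indispensability needed in the first two cases at every relevant height. Since the three-case decomposition is entirely local in $i$, no difference from a higher level is ever forced upon us, and the halved range of the hypothesis is genuinely sufficient.
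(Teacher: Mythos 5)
Your proof is correct and takes essentially the same route as the paper: the paper's proof is literally ``mimic the proof of Theorem~\ref{main1}, applying Lemma~\ref{symmetriclemma} instead of Lemma~\ref{indispensable},'' which is exactly the three-case analysis (via Lemma~\ref{betti} and Lemma~\ref{dif}) that you spell out for the restricted range $1\leq i \leq \lfloor \pd(E)/2 \rfloor$. Your use of Theorem~\ref{main1} as a black box for the forward direction is a harmless shortcut, and your bookkeeping ($\pd(E)=\pd(S)+1$, locality of the case decomposition in $i$) matches what the paper leaves implicit.
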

\begin{proof} The proof mimics the proof of Theorem \ref{main1}, applying Lemma \ref{symmetriclemma} instead of Lemma \ref{indispensable}.
 \end{proof}
Let $n>1$ and $\{\e_i : i=1,\dots,n\}$ denote the canonical basis of $\N^n$. Let $u_1,\dots,u_n$ be some positive integers and $S$ be the semigroup generated minimally by $u_1\e_1,\dots,u_n\e_n$. It is clear that $I_S=(0)$ and thus $K[S]=K[x_1,\dots,x_n]$ has a SIFRE. 

Fix $\a=(-u_1,u_2,\dots,u_n)$, $\a_0=(0,u_2,\dots,u_n)\in S$ and consider the extensions of $S$ defined recursively as follows:
\begin{itemize}
\item $E_1=2S+\N\{\a_1\}$, where $\a_1=2\a_0-\a=(u_1,u_2,\dots,u_n)\in S$, and 
\item $E_j=2E_{j-1}+\N\{\a_j\}$, where $\a_j=\a_{j-1}+2\a_{j-2} \in E_{j-1}$, for $j\geq2$.
\end{itemize}

\begin{proposition} With the notations above, we have
\begin{enumerate}
 \item $\a_j-2\a_{j-1}=(-1)^j \a$, for all $j \geq 1$, 
 \item $\a_j +\b' -\b =u \a$, for some $u\in \Z  - \{0\}$ and for all $\b'\in \B_{i-1}(E_{j-1})$, $\b\in \B_{i}(E_{j-1})$, where $j\geq 2$ and $1\leq i \leq \lfloor j/2 \rfloor$,
 \item $K[E_j]$ has a SIFRE for all $j\geq 1$.

\end{enumerate}
\end{proposition}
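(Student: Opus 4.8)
The plan is to prove the three statements in order, since each feeds into the next: statement (1) is a pure recursion computation, statement (2) uses (1) together with the structure of the Betti degrees under repeated extension, and statement (3) applies the symmetric criterion (Corollary~\ref{symmetriccorollary}) using (2).

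For statement (1), I would proceed by induction on $j$. The base case $j=1$ is immediate from the definition $\a_1=2\a_0-\a$, which gives $\a_1-2\a_0=-\a=(-1)^1\a$. For the inductive step, I would use the recursion $\a_j=\a_{j-1}+2\a_{j-2}$ to compute $\a_j-2\a_{j-1}=\a_{j-1}+2\a_{j-2}-2\a_{j-1}=-(\a_{j-1}-2\a_{j-2})$, and then invoke the inductive hypothesis $\a_{j-1}-2\a_{j-2}=(-1)^{j-1}\a$ to conclude $\a_j-2\a_{j-1}=-(-1)^{j-1}\a=(-1)^j\a$. This is entirely routine.

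For statement (2), the key structural fact I would extract from Lemma~\ref{betti} (specialized to extensions, as in the proof of Theorem~\ref{main1}) is that $\B_i(E_{j-1})=2\B_i(E_{j-2})\cup 2[\B_{i-1}(E_{j-2})+\a_{j-1}]$, so every Betti degree of $E_{j-1}$ is an even multiple of a $\Z$-linear expression built from the generators and the gluing elements $\a_1,\dots,\a_{j-1}$. The crucial point is that all the relevant lattice elements lie in the rank-one lattice $\Z\a$ modulo the base lattice $\Z S$. More precisely, since each $\a_t\equiv (-1)^t\a\pmod{2\Z^n}$-type relations from (1) tie the gluing elements back to $\a$, the difference $\a_j+\b'-\b$ collapses into $\Z\a$. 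I would argue that $\b$ and $\b'$, being consecutive-homological-degree Betti elements, differ by an element forced into $\Z\a$ by the gluing structure (this is exactly the mechanism behind Conditions (\ref{cond2}) and (\ref{cond3}) holding automatically in the single-element-gluing case), and then that adding $\a_j$ keeps us in $\Z\a$ while the coefficient $u$ is nonzero because $\a_j$ contributes a nonzero multiple that cannot be cancelled. The restriction $1\le i\le\lfloor j/2\rfloor$ matters because in this range the homological degrees are small enough that the Betti elements decompose cleanly through the recursive tower without the two halves of the symmetric resolution interfering.

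For statement (3), I would induct on $j$, using Corollary~\ref{symmetriccorollary}. The base case $j=1$: $E_1$ is a complete intersection (single gluing of the regular ring $K[S]$), hence symmetric, and $K[S]=K[x_1,\dots,x_n]$ has a SIFRE trivially; the condition $\pm(\a_1+\b'-\b)\notin S$ to be checked is vacuous or reduces to $\pm(\a_1-0)=\pm\a_1\notin S$ for the relevant degrees, which holds because $\a_1$ is a minimal generator direction. For the inductive step, since $E_j=2E_{j-1}+\N\{\a_j\}$ is again symmetric (being an iterated complete-intersection gluing), Corollary~\ref{symmetriccorollary} tells me $K[E_j]$ has a SIFRE if and only if $K[E_{j-1}]$ has a SIFRE (true by induction) and $\pm(\a_j+\b'-\b)\notin E_{j-1}$ for $\b\in\B_i(E_{j-1})$, $\b'\in\B_{i-1}(E_{j-1})$, $1\le i\le\lfloor\pd(E_j)/2\rfloor=\lfloor j/2\rfloor$. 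By statement (2), this difference equals $u\a$ with $u\ne0$, so it suffices to show $\pm u\a\notin E_{j-1}$ for every nonzero integer $u$; since $\a=(-u_1,u_2,\dots,u_n)$ has a negative first coordinate (and $-\a$ has negative coordinates among the rest), no positive or negative multiple of $\a$ can lie in $E_{j-1}\subset\N^n$, completing the induction. The main obstacle I anticipate is statement (2): carefully tracking how the Betti degrees of the tower decompose and verifying that the coefficient $u$ is genuinely nonzero for all admissible $(i,\b,\b')$ requires unwinding the tensor-product resolution of Theorem~\ref{res} through $j-1$ layers, and keeping the bookkeeping of the $(-1)^t\a$ terms from (1) consistent is where the real work lies.
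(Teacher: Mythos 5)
Your parts (1) and (3) are correct and essentially identical to the paper's argument: (1) is the routine induction, and (3) runs the induction through Corollary~\ref{symmetriccorollary}, with the punchline that $u\a\notin\N^n\supseteq E_{j-1}$ for $u\neq 0$. The gap is in (2), which is the heart of the proposition, and your sketch does not contain a proof of it. The one concrete mechanism you offer --- that $\b$ and $\b'$, ``being consecutive-homological-degree Betti elements, differ by an element forced into $\Z\a$'' --- is false as stated: already for $E_1$ take $\b=2\a_1\in\B_1(E_1)$ and $\b'=0\in\B_0(E_1)$; then $\b-\b'=2\a_1=(2u_1,\dots,2u_n)$ lies in $\Z\a=\Z(-u_1,u_2,\dots,u_n)$ only if $k=-2$ and $k=2$ simultaneously, which is impossible. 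What is true is $\b-\b'\in\a_j+\Z\a$, but that is precisely statement (2), so invoking it is circular. Similarly, ``the coefficient $u$ is nonzero because $\a_j$ contributes a nonzero multiple that cannot be cancelled'' is an assertion, not an argument; nothing in your sketch rules out cancellation.

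The missing ideas, which constitute the paper's actual proof, are these. One inducts on $j$ and applies the recursion $\B_i(E_{p-1})=2\B_i(E_{p-2})\cup 2[\B_{i-1}(E_{p-2})+\a_{p-1}]$ to \emph{both} $\b$ and $\b'$, producing four cases. In three of them one writes $\a_p+\b'-\b=(\a_p-2\a_{p-1})+2(\a_{p-1}+\cc'-\cc)$, or inserts an intermediate $\d\in\B_{i-1}(E_{p-2})$ when the inner degrees differ by two, and the inductive hypothesis applies to each consecutive-degree difference; the total is $(-1)^p\a+2(\text{integer})\a$, an \emph{odd} multiple of $\a$, hence nonzero. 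This parity argument is what ``cannot be cancelled'' actually means, and it is absent from your proposal. The fourth case --- $\b'=2\cc'$ and $\b=2(\cc+\a_{p-1})$ with $\cc,\cc'$ both in $\B_{i-1}(E_{p-2})$, i.e.\ in the \emph{same} homological degree --- is not covered by the inductive hypothesis at all; it requires a separate sub-induction showing that differences of same-degree Betti elements of each $E_q$ lie in $\Z\a$ (now allowing the zero multiple, which is why oddness of the final coefficient still saves nonvanishing). Your sketch never distinguishes same-degree from consecutive-degree differences, so this case, and with it all of (2), remains unproven. Finally, your stated reason for the range $1\le i\le\lfloor j/2\rfloor$ (``the two halves of the symmetric resolution not interfering'') is not the operative one: that bound simply comes from Corollary~\ref{symmetriccorollary}, which is the form of the criterion one needs so that the inductive hypothesis on the tower suffices.
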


\begin{proof} We use induction on $j$ in all items.
\begin{enumerate}
 \item The claim follows from the definition of $\a_1=2\a_0-\a$ when $j=1$. Assuming that the claim is true for $j=p-1$, we have $\a_p-2\a_{p-1}= -(\a_{p-1}-2\a_{p-2})=-(-1)^{p-1} \a=(-1)^{p} \a$, since $\a_p=\a_{p-1}+2\a_{p-2}$, for all $p \geq 2$.
 \item When $j=2$ and $1\leq i \leq \lfloor j/2 \rfloor=1$, we have $\a_2 +\b' -\b =\a_2-2\a_1=\a$, by Part $(1)$, for all $\b'\in \B_{0}(E_{1})=\{0\}$, $\b\in \B_{1}(E_{1})=\{2\a_1\}$, as $I_{E_1}$ is a principal ideal generated by $y^2-x_1^{a_1}\cdots x_n^{a_n}$ of $E_1$-degree $2\a_1$. Assume now that the claim is true for all indices $3\leq j \leq p-1$. We need to study $\a_p +\b' -\b$, for all $\b'\in \B_{i-1}(E_{p-1})$, $\b\in \B_{i}(E_{p-1})$, where $p\geq 4$ and $1\leq i \leq \lfloor p/2 \rfloor$. There are four cases to consider since by Lemma \ref{betti}, $\B_i(E_{p-1})=2 \B_i(E_{p-2}) \cup 2[ \B_{i-1}(E_{p-2})+ \a_{p-1}]$: \\
 Case (i): $\b'=2\cc'$, $\cc'\in \B_{i-1}(E_{p-2})$ and $\b=2\cc$, $\cc \in \B_{i}(E_{p-2})$. In this case, $\a_p +\b' -\b=\a_p -2\a_{p-1}+2(\a_{p-1}+\cc' -\cc)=(-1)^p\a+2u\a$, for some $u\in \Z  - \{0\}$, by the induction hypothesis.\\
 Case (ii): $\b'=2(\cc'+\a_{p-1})$, $\cc'\in \B_{i-2}(E_{p-2})$ and $\b=2(\cc+\a_{p-1})$, $\cc \in \B_{i-1}(E_{p-2})$. In this case, $\a_p +\b' -\b=\a_p -2\a_{p-1}+2(\a_{p-1}+\cc' -\cc)=(-1)^p\a+2u\a$, for some $u\in \Z  - \{0\}$, by the induction hypothesis.\\
 Case (iii): $\b'=2(\cc'+\a_{p-1})$, $\cc'\in \B_{i-2}(E_{p-2})$ and $\b=2\cc$, $\cc \in \B_{i}(E_{p-2})$. Taking $\d \in \B_{i-1}(E_{p-2})$ in this case, we have $\a_p +\b' -\b=\a_p -2\a_{p-1}+2(\a_{p-1}+\cc' -\d)+2(\a_{p-1}+\d -\cc)=(-1)^p\a+2u_1\a+2u_2\a$, for some $u_1,u_2\in \Z  - \{0\}$, by the induction hypothesis.\\
 Case (iv): $\b'=2\cc'$, $\cc'\in \B_{i-1}(E_{p-2})$ and $\b=2(\cc+\a_{p-1})$, $\cc \in \B_{i-1}(E_{p-2})$. So, $\a_p +\b' -\b=\a_p -2\a_{p-1}+2(\cc' -\cc)=(-1)^p\a+2(\cc' -\cc)$. Note that the proof will be complete if we show that $\cc' -\cc=v\a$, for some $v\in \Z$. Let us prove this by verifying the claim that $\cc' -\cc=v\a$, for some $v\in \Z$, and for all $\cc,\cc'\in \B_{i-1}(E_{q})$, $1\leq i \leq \lfloor p/2\rfloor$, using induction on $1\leq q \leq p-2$. For $q=1$, the claim is trivial with $v=0$ as $i=1$ and $\B_0(E_1)=\{0\}$. Assume now that it is true for $q=r-1$, and consider $\cc,\cc'\in \B_{i-1}(E_{r})$. By Lemma \ref{betti}, we have three possibilities as before and in two of them $\cc' -\cc=2(\d'-\d)$, for either $\d',\d\in \B_{i-1}(E_{r-1})$ or $\d',\d\in \B_{i-2}(E_{r-1})$. So, we are done by induction hypothesis on $q$. In the third one, $\cc' -\cc=2(a_p+\d'-\d)=2u\a$, by the induction hypothesis on $p$, where $\d\in \B_{i-1}(E_{r-1})$ or $\d'\in \B_{i-2}(E_{r-1})$.

 \item As $I_{E_1}$ is a principal ideal, the projective dimension of $K[E_1]$ is $1$ and $ \B_{0}(E_{1})=\{0\}$ and $\B_{1}(E_{1})=\{2\a_1\}$. So, when $j=1$, there is nothing to check in Corollary \ref{symmetriccorollary} as $K[S]$ has a SIFRE and $\lfloor j/2 \rfloor=0$. So, $K[E_1]$ has a SIFRE. 
Assume that the claim is true for $j=p-1$, so $K[E_{p-1}]$ has a SIFRE for all $p\geq 2$. We first note that the projective dimension of $K[E_p]$ is $p$, by Theorem \ref{res}. So, we need to verify that $\a_p +\b' -\b \notin E_{p-1}$, for all $\b'\in \B_{i-1}(E_{p-1})$, $\b\in \B_{i}(E_{p-1})$, where $p\geq 2$ and $1\leq i \leq \lfloor p/2 \rfloor$, which is true by $(2)$ as $E_{p-1}\subset \N^n$ and $u\a\notin \N^n$, for $u\in \Z  - \{0\}$. \end{enumerate}
So, $K[E_j]$ has a SIFRE for $j\geq 1$.
\end{proof}

\subsection{Numerical semigroups} In this section, we characterize extensions of some numerical semigroups having SIFRE. As the extensions of $3$-generated symmetric numerical semigroups were classified in \cite[Theorem 25]{bafrsa}, we start with $3$-generated non-symmetric numerical semigroups here. It is known that they have SIFREs (see \cite[Example 20]{bafrsa}). As a first application, we determine their extensions which have SIFREs, using the following results. 
\begin{theorem}[Herzog {\cite[Proposition 3.2]{he}}]
Let $\alpha_p$ be the smallest positive integer such that $\alpha_pm_p=\alpha_{pq}m_q+\alpha_{pr}m_r$, for some $\alpha_{pq},\alpha_{pr}\in\N$, where $\{ p,q,r\}=\{ 1,2,3\}$. Then $S=\langle m_1,m_2,m_3\rangle$ is $3$-generated not symmetric if and only if $\alpha_{pq}>0$ for all $p,q$, and $\alpha_{qp}+\alpha_{rp}=\alpha_p$, for all  $\{ p,q,r\}=\{ 1,2,3\}$. Then
$K[S]=R/(f_1,f_2,f_3)$, where $$f_1=x_1^{\alpha_1}-x_2^{\alpha_{12}}x_3^{\alpha_{13}}, f_2=x_2^{\alpha_2}-x_1^{\alpha_{21}}x_3^{\alpha_{23}}, f_3=x_3^{\alpha_3}-x_1^{\alpha_{31}}x_2^{\alpha_{32}}.$$
\end{theorem}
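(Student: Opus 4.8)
The plan is to exploit that $K[S]$ is a one-dimensional Cohen--Macaulay domain, so that $I_S$ is a perfect ideal of grade $2$ in $R=K[x_1,x_2,x_3]$, and to read its structure off from the Hilbert--Burch theorem, combined with Kunz's characterization that $S$ is symmetric precisely when $K[S]$ is Gorenstein.

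First I would record the defining binomials. For each $p$ with $\{p,q,r\}=\{1,2,3\}$, minimality of $\alpha_p$ together with the minimal generation of $S$ forces the representation $\alpha_pm_p=\alpha_{pq}m_q+\alpha_{pr}m_r$ to be unique, with $\alpha_{pq}<\alpha_q$ and $\alpha_{pr}<\alpha_r$; each $f_p=x_p^{\alpha_p}-x_q^{\alpha_{pq}}x_r^{\alpha_{pr}}$ then lies in $I_S$ since both of its monomials carry the $S$-degree $\alpha_pm_p$. The crux here is that the $f_p$ generate $I_S$: a pure-difference binomial $\x^{\a}-\x^{\b}\in I_S$ is reduced modulo the $f_p$ by repeatedly lowering the exponent of any variable $x_p$ that meets or exceeds its threshold $\alpha_p$, and the minimality built into each $\alpha_p$ guarantees this rewriting terminates in a unique normal form, whence $I_S=(f_1,f_2,f_3)$.

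Next I would split into the symmetric and non-symmetric cases according to the number of generators. If $\alpha_{pr}=0$ for some $p$, then $f_p=x_p^{\alpha_p}-x_q^{\alpha_{pq}}$ uses only two variables and one checks that one of the other two relations becomes redundant, so $I_S$ is a complete intersection; hence $K[S]$ is Gorenstein and $S$ is symmetric. Conversely, when every $\alpha_{pq}>0$ no such collapse occurs and $\mu(I_S)=3$; since a grade-$2$ perfect ideal that is not a complete intersection cannot be Gorenstein, $K[S]$ is not Gorenstein and $S$ is not symmetric. This yields the stated equivalence, and it remains to produce the exponent identities in the non-symmetric case.

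Finally, for that case I would invoke Hilbert--Burch: the grade-$2$ perfect ideal $I_S$ with $\mu(I_S)=3$ is the ideal of maximal minors of a $3\times2$ presentation matrix $M$, and because $I_S$ is homogeneous for the $S$-grading with monomial generators, $M$ may be chosen with monomial entries. Matching the three signed $2\times2$ minors of $M$ against $f_1,f_2,f_3$ forces $M$, up to reordering rows and columns, into the shape in which the pure power $x_p^{\alpha_p}$ arises as the product $x_p^{\alpha_{qp}}\cdot x_p^{\alpha_{rp}}$ of two diagonal entries; comparing exponents then gives exactly $\alpha_{qp}+\alpha_{rp}=\alpha_p$ for each $p$. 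I expect the main obstacle to be precisely this last step --- certifying that the presentation matrix can be taken with monomial entries and that its minors are literally the $f_p$ rather than mere associates, since that is where the additive exponent relations are born. The generation claim of the second paragraph is the other delicate point, as its termination argument rests essentially on the minimality defining each $\alpha_p$.
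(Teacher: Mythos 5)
First, a remark on the comparison: the paper does not prove this statement at all --- it is quoted as Herzog's result \cite[Proposition 3.2]{he}, with the accompanying resolution attributed to Denham in the very next theorem --- so your proposal can only be measured against the classical argument, and there it has a genuine gap. The fatal step is your second paragraph, where you claim $I_S=(f_1,f_2,f_3)$ because reduction ``by repeatedly lowering the exponent of any variable $x_p$ that meets or exceeds its threshold $\alpha_p$'' terminates in a unique normal form, with termination ``guaranteed by the minimality'' of the $\alpha_p$. That rewriting system is in general neither terminating nor normalizing. Take $S=\langle 3,4,5\rangle$, which is non-symmetric: then $\alpha_1=3$, $\alpha_2=\alpha_3=2$, $f_1=x_1^3-x_2x_3$, $f_2=x_2^2-x_1x_3$, $f_3=x_3^2-x_1^2x_2$, so your rules are $x_1^3\to x_2x_3$, $x_2^2\to x_1x_3$, $x_3^2\to x_1^2x_2$, and they produce the cycle
$$x_2^2x_3^2 \;\longrightarrow\; x_1x_3^3 \;\longrightarrow\; x_1^3x_2x_3 \;\longrightarrow\; x_2^2x_3^2.$$
Worse, the whole fiber of $S$-degree $18$ consists of $x_1^6$, $x_1^2x_2^3$, $x_1x_3^3$, $x_1^3x_2x_3$, $x_2^2x_3^2$, and none of these has all exponents below the thresholds $(3,2,2)$; there is no normal form in this fiber at all, so no reduction strategy can terminate, and your argument never certifies that $x_1^6-x_2^2x_3^2\in(f_1,f_2,f_3)$. (What your minimality/disjoint-support reasoning does prove is uniqueness: a fiber contains at most one monomial below all thresholds. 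Existence is exactly what fails.) Repairing this requires a genuinely different proof of generation: Herzog's original semigroup-theoretic computation, or Buchberger's criterion for a fixed term order --- note that with respect to any term order the rules change, e.g.\ the leading monomial of $f_3$ above is $x_1^2x_2$, not $x_3^2$ --- or deducing generation from the Hilbert--Burch matrix after the identities $\alpha_p=\alpha_{qp}+\alpha_{rp}$ have been established independently.

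There is a second inaccuracy that matters for an ``if and only if'' statement: your opening claim that minimality forces the representation $\alpha_pm_p=\alpha_{pq}m_q+\alpha_{pr}m_r$ to be unique with $\alpha_{pq}<\alpha_q$ and $\alpha_{pr}<\alpha_r$ is false in the symmetric case; for $S=\langle 6,10,15\rangle$ one has $\alpha_1=5$ with two representations $30=3\cdot 10=2\cdot 15$, and $\alpha_{12}=3=\alpha_2$. These facts do hold when all $\alpha_{pq}>0$, but they are conclusions of Herzog's analysis, not available inputs, so invoking them up front makes the argument circular. The same applies to the two points you flag yourself: that the Hilbert--Burch matrix can be taken with monomial entries whose minors are literally the $f_p$ (this is precisely the content of Denham's theorem quoted after this one in the paper), and that $\mu(I_S)=3$ in the non-symmetric case --- both presuppose the broken generation step. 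Your global architecture (Kunz: symmetric $\iff$ Gorenstein; grade-$2$ Gorenstein $\implies$ complete intersection; Hilbert--Burch) is sound and is indeed the modern packaging, but as written the proposal assumes exactly the combinatorial core that Herzog's proposition exists to prove.
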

Although the following follows from the previous result and the classical Hilbert-Burch theorem, a detailed proof has been given by Denham in \cite[Lemma 2.5]{den}.
\begin{theorem}
If $S$ is a $3$-generated semigroup which is not symmetric then
$K[S]$ has a minimal graded free $R$-resolution
$$0\longrightarrow R^2\stackrel{\phi_2}{\longrightarrow} R^3\stackrel{\phi_1}{\longrightarrow}R {\longrightarrow}K[S]\longrightarrow 0,$$
where $\phi_1=(f_1\:f_2\:f_3)$,
and $\phi_2=\left( \begin{array}{cc}x_3^{\alpha_{23}}&x_2^{\alpha_{32}}\\x_1^{\alpha_{31}}&x_3^{\alpha_{13}}\\x_2^{\alpha_{12}}&x_1^{\alpha_{21}}\end{array}\right).$
\end{theorem}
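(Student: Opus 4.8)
The plan is to recognize the displayed complex as the Hilbert--Burch resolution of the perfect grade-two ideal $I_S$. First I would record the algebraic preliminaries. Since $K[S]\subset K[t]$ is a domain, the toric ideal $I_S=\Ker(R\to K[S])$ is prime, and as $K[S]$ has Krull dimension one while $R=K[x_1,x_2,x_3]$ has dimension three, the dimension formula for affine $K$-algebras gives $\codim(I_S)=3-1=2$. Because $R$ is Cohen--Macaulay, $\operatorname{grade}(I_S)=\codim(I_S)=2$. By Herzog's theorem $I_S=(f_1,f_2,f_3)$ and these three binomials form a minimal generating set, so any minimal graded free resolution begins $\cdots\to R^3\stackrel{\phi_1}{\longrightarrow}R\to K[S]\to0$ with $\phi_1=(f_1\:f_2\:f_3)$.

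Next I would invoke the Hilbert--Burch theorem in its converse form: if $\psi$ is a $3\times2$ matrix over $R$ whose ideal $I_2(\psi)$ of maximal minors has grade at least two, then
$$0\longrightarrow R^2\stackrel{\psi}{\longrightarrow}R^3\stackrel{\mu}{\longrightarrow}R\longrightarrow R/I_2(\psi)\longrightarrow0$$
is a free resolution, where $\mu$ is the row of signed maximal $2\times2$ minors of $\psi$. I claim the displayed matrix $\phi_2$ is the required $\psi$. Computing its three maximal minors by deleting one row at a time and attaching the alternating signs $(-1)^{i+1}$ yields
$$x_1^{\alpha_{21}+\alpha_{31}}-x_2^{\alpha_{12}}x_3^{\alpha_{13}},\qquad x_2^{\alpha_{12}+\alpha_{32}}-x_1^{\alpha_{21}}x_3^{\alpha_{23}},\qquad x_3^{\alpha_{13}+\alpha_{23}}-x_1^{\alpha_{31}}x_2^{\alpha_{32}},$$
and the balance relations $\alpha_{qp}+\alpha_{rp}=\alpha_p$ from Herzog's theorem collapse the leading exponents to $\alpha_1,\alpha_2,\alpha_3$, so these minors are exactly $f_1,f_2,f_3$. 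Thus $I_2(\phi_2)=I_S$, $\operatorname{grade}(I_2(\phi_2))=2$, and $\mu=\phi_1$; the identity $\phi_1\phi_2=0$ needs no separate check, since each entry of the product is a $3\times3$ determinant with a repeated column. The converse of Hilbert--Burch now yields that the displayed complex is a free resolution of $K[S]$.

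Finally I would verify minimality. Every entry of $\phi_2$ is a power $x_j^{\alpha_{ij}}$ with $\alpha_{ij}>0$ (Herzog's theorem guarantees $\alpha_{pq}>0$ for all $p,q$), and every entry of $\phi_1$ is a difference of two distinct monomials of positive $S$-degree; hence all entries of both maps lie in the irrelevant maximal ideal $(x_1,x_2,x_3)$, and the resolution is minimal. The one step carrying genuine content is the identification of the maximal minors with $f_1,f_2,f_3$: it rests entirely on the balance relations $\alpha_{qp}+\alpha_{rp}=\alpha_p$, without which the exponents appearing in the minors would not consolidate into the $\alpha_p$. The preliminaries on grade and the minimality check are routine bookkeeping once these relations are invoked.
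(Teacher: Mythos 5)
Your proposal is correct and takes exactly the route the paper itself indicates: the paper gives no proof of this theorem, remarking only that it ``follows from the previous result and the classical Hilbert--Burch theorem'' (citing Denham for details), and your argument fills in precisely that outline. The key computations check out: the signed $2\times2$ minors of $\phi_2$ do collapse to $f_1,f_2,f_3$ via the relations $\alpha_{qp}+\alpha_{rp}=\alpha_p$, the grade-two hypothesis follows from $\dim K[S]=1$, and minimality follows from $\alpha_{pq}>0$.
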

\begin{remark} \label{3generated} As the resolution above is graded, $\B_1(S)=\{d_1,d_2,d_3\}$, where $d_p=\alpha_pm_p=\alpha_{pq}m_q+\alpha_{pr}m_r$, for all $p,q,r \in \{ 1,2,3\}$. Since the entries in $\phi_1\phi_2=0$ are $S$-homogeneous, we have $\B_2(S)=\{b_1,b_2\}$, where 
$$b_1=\aa_{23}m_3+d_1=\aa_{31}m_1+d_2=\aa_{12}m_2+d_3, \mbox{ and }$$ 
$$b_2=\aa_{32}m_2+d_1=\aa_{13}m_3+d_2=\aa_{21}m_1+d_3.$$
\end{remark}

\begin{theorem} Let $S=\langle m_1,m_2,m_3\rangle$ be a non-symmetric numerical semigroup and $E$ be an extension of $S$, where $m=u_1m_1+u_2m_2+u_3m_3$. Then, $K[E]$ has a SIFRE if and only if $0< u_p < \min\{\aa_{qp},\aa_{rp}\}$ for all $\{ p,q,r\}=\{ 1,2,3\}$. In particular, $S$ does not have an extension with a SIFRE if and only if $\aa_{pq}=1$ for some  $p,q\in\{ 1,2,3\}$.
\end{theorem}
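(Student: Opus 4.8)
The plan is to specialize Theorem~\ref{main1} to this situation and read off the stated inequalities. Since a $3$-generated non-symmetric numerical semigroup has a SIFRE by \cite[Example 20]{bafrsa}, Theorem~\ref{main1} tells us that $K[E]$ has a SIFRE if and only if $\pm(m+\mathbf{b}'-\mathbf{b})\notin S$ for all $\mathbf{b}\in\B_i(S)$, $\mathbf{b}'\in\B_{i-1}(S)$. As $\pd(S)=2$ we have $\B_0(S)=\{0\}$, $\B_1(S)=\{d_1,d_2,d_3\}$ and $\B_2(S)=\{b_1,b_2\}$ from Remark~\ref{3generated} (and there are no third syzygies of $K[S]$, so $i=3$ yields nothing). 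The first step is to simplify: using Remark~\ref{3generated} one checks that each difference $b_q-d_p$ is a single term, namely $b_1-d_1=\aa_{23}m_3$, $b_1-d_2=\aa_{31}m_1$, $b_1-d_3=\aa_{12}m_2$, $b_2-d_1=\aa_{32}m_2$, $b_2-d_2=\aa_{13}m_3$, $b_2-d_3=\aa_{21}m_1$. Hence the conditions coming from $i=1$ are $\pm(m-\aa_pm_p)\notin S$ and those from $i=2$ are $\pm(m-\aa_{ij}m_j)\notin S$ for all $i\neq j$. Altogether the criterion becomes: for every $p$ and every $c\in\{\aa_{qp},\aa_{rp},\aa_p\}$ one has $\pm(m-c\,m_p)\notin S$, where $\{p,q,r\}=\{1,2,3\}$ and $\aa_p=\aa_{qp}+\aa_{rp}$.

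For necessity, the positive direction of the $i=2$ conditions forces the upper bounds: if $u_p\geq\aa_{qp}$ then $m-\aa_{qp}m_p=(u_p-\aa_{qp})m_p+\sum_{j\neq p}u_jm_j$ has nonnegative coordinates, so it lies in $S$; the same with $\aa_{rp}$ gives $u_p<\min\{\aa_{qp},\aa_{rp}\}$. For the lower bound I would suppose $u_p=0$ and use the upper bounds already proved: then $\aa_{pj}-u_j>0$ for $j\neq p$, so Herzog's relation $\aa_pm_p=\aa_{pq}m_q+\aa_{pr}m_r$ yields $\aa_pm_p-m=\sum_{j\neq p}(\aa_{pj}-u_j)m_j\in S\setminus\{0\}$, contradicting the negative direction of the $i=1$ condition. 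Thus $u_p>0$.

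For sufficiency, assume $0<u_p<\min\{\aa_{qp},\aa_{rp}\}$ for all $p$ and treat the two directions separately. Because $\aa_{qp}m_p-m\in S$ would imply $\aa_pm_p-m=\aa_{rp}m_p+(\aa_{qp}m_p-m)\in S$, the negative direction reduces to showing $\aa_pm_p-m\notin S$; this I would deduce from the minimality of $\aa_p$, since a nonnegative factorization of $\aa_pm_p-m$ would produce a factorization of $\aa_pm_p$ having strictly positive $m_q$- and $m_r$-content, hence $m_p$-content strictly below $\aa_p$, contradicting that $\aa_p$ is the least positive integer carrying $m_p$ into $\langle m_q,m_r\rangle$. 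The positive direction, symmetrically, reduces to showing $m-\min\{\aa_{qp},\aa_{rp}\}m_p\notin S$. Here I would argue through the relation lattice: any two nonnegative factorizations of an element of $S$ differ by a $\Z$-combination of the Herzog relation vectors $r_1=(\aa_1,-\aa_{12},-\aa_{13})$, $r_2=(-\aa_{21},\aa_2,-\aa_{23})$, $r_3=(-\aa_{31},-\aa_{32},\aa_3)$, which satisfy $r_1+r_2+r_3=0$. If $m$ had a nonnegative factorization $u+w$ with $w=\lambda_1r_1+\lambda_2r_2\neq 0$ and positive $p$-coordinate, then running through the sign patterns of $(\lambda_1,\lambda_2)$ — and, when both are negative, rewriting $w$ as a nonnegative combination of two of the $r_i$ — one of the other two coordinates of $u+w$ is driven strictly below zero, because $u_j<\aa_{ij}$ for the off-diagonal entries involved; this contradiction gives the claim.

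Finally, the ``in particular'' statement follows at once: an admissible $u_p$ exists for each $p$ exactly when $\min\{\aa_{qp},\aa_{rp}\}\geq 2$ for all $p$, i.e. when every off-diagonal $\aa_{ij}\geq 2$, in which case $u_1=u_2=u_3=1$ together with a suitable $\ell$ gives an extension with a SIFRE; conversely $\aa_{ij}=1$ for some $i\neq j$ leaves no room for $u_j$. I expect the positive direction of sufficiency to be the main obstacle: unlike the negative direction it is not controlled by a single minimality statement but requires ruling out every nonnegative factorization of $m$ with a large coordinate, and it is precisely in this lattice case analysis that the full strength of the hypotheses $u_p<\min\{\aa_{qp},\aa_{rp}\}$ is used.
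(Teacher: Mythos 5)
Your proposal is correct, and up through the reduction (via Theorem~\ref{main1} and Remark~\ref{3generated} the criterion becomes $\pm(m-c\,m_p)\notin S$ for $c\in\{\aa_{qp},\aa_{rp},\aa_p\}$) and the necessity direction it coincides with the paper's proof; your sufficiency argument, however, takes a genuinely different route. The paper verifies the required non-memberships family by family: $m-d_p\notin S$ and $d_p-m\notin S$ by the minimality of $\aa_p$; the differences $b_i-m-d_j\notin S$ by invoking the pseudo-Frobenius elements $PF(S)=\{b_1-N,b_2-N\}$ of \cite[Corollary 11]{bafrsa}; and $m+d_i-b_j\notin S$ by a minimality argument forcing $u_p+\aa_{ip}-v_p\geq\aa_p$, hence $u_p\geq\aa_p-\aa_{ip}\geq\min\{\aa_{qp},\aa_{rp}\}$, a contradiction. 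You instead collapse all the ``negative'' conditions into the single claim $\aa_pm_p-m\notin S$ (monotonicity in $c$, using $\aa_p=\aa_{qp}+\aa_{rp}$), proved by the same minimality argument as the paper, and all the ``positive'' conditions into $m-\min\{\aa_{qp},\aa_{rp}\}m_p\notin S$, which you deduce from the stronger statement that $(u_1,u_2,u_3)$ is the \emph{unique} nonnegative factorization of $m$. Your sign-pattern analysis does go through: after rewriting any nonzero lattice element as $w=\mu_ir_i+\mu_jr_j$ with $\mu_i,\mu_j\geq 0$ not both zero, the coordinate $k\notin\{i,j\}$ of $u+w$ is at most $u_k-\min\{\aa_{ik},\aa_{jk}\}<0$, so no second factorization exists (the hypothesis that $w$ has positive $p$-coordinate is not even needed). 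Your route buys uniformity, a stronger byproduct (unique factorization of $m$), and independence from the pseudo-Frobenius description of $S$; the paper's route stays entirely within elementary minimality arguments and cited facts about $PF(S)$. The one point you must still justify is the opening assertion of your lattice argument: that two factorizations of an element of $S$ differ by a $\Z$-combination of $r_1,r_2,r_3$ says precisely that these vectors generate the full relation lattice $\{(a_1,a_2,a_3)\in\Z^3 : a_1m_1+a_2m_2+a_3m_3=0\}$, which is not automatic from their merely being relations; it does follow from Herzog's theorem that $f_1,f_2,f_3$ generate $I_S$, combined with the standard correspondence between binomial generating sets of a lattice ideal and group generators of the lattice. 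With that reference supplied, your proof is complete.
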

\begin{proof} If  $K[E]$ has a SIFRE, then $m+d_i-b_j \notin S$ and $d_i-m \notin S$, for all $i,j\in\{1,2,3\}$ by Theorem \ref{main1}. By Remark \ref{3generated}, there are $i,j\in \{1,2,3\}$ such that $m+d_i-b_j=(u_p-\aa_{qp})m_p+u_qm_q+u_rm_r$.  When, $u_p\geq \aa_{qp}$ for some  $p,q\in\{ 1,2,3\}$, $m+d_i-b_j \in S$, which is a contradiction. Thus, $u_p < \aa_{qp}$ for all  $p,q\in\{ 1,2,3\}$. If $u_i=0$, for some $i$, then $d_i-m=(\aa_{ip}-u_p)m_p+(\aa_{iq}-u_q)m_q \in S$, which is a contradiction.

Conversely, assume that $0< u_p < \min\{\aa_{qp},\aa_{rp}\}$ for all $\{ p,q,r\}=\{ 1,2,3\}$. We claim $m-d_p=u_pm_p+(u_q-\aa_{pq})m_q+(u_r-\aa_{pr})m_r \notin S$. If not, $m-d_p=v_pm_p+v_qm_q+v_rm_r$, for some $v_i \in \N$, and so $(u_p-v_{p})m_p=(u_q+v_q-\aa_{pq})m_q+(u_r+v_r-\aa_{pr})m_r>0$ which contradicts to $\aa_p$ being the smallest positive integer with this property. Next, we prove $d_p-m=(\aa_{p}-u_p)m_p-u_qm_q-u_rm_r \notin S$. If not, $d_p-m=v_pm_p+v_qm_q+v_rm_r$, for some $v_i \in \N$, and so $(\aa_{p}-u_p-v_p)m_p=(u_q+v_q)m_q+(u_r+v_r)m_r>0$, which contradicts the fact that $\aa_p$ is the smallest positive integer with this property.

By \cite[Corollary 11]{bafrsa}, $PF(S)=\{b_1-N,b_2-N\}$, where $N=m_1+m_2+m_3$, are the pseudo-Frobenius elements of $S$. In particular, $b_i-N\notin S$. This implies that $b_i-m-d_j \notin S$, since $b_i-N=(b_i-m-d_j)+(\aa_p+u_p-1)m_p+(u_q-1)m_q+(u_r-1)m_r$. Finally, by Remark \ref{3generated}, for all $i,j$ there are $p,q,r$ such that $m+d_i-b_j=(u_p+\aa_{ip})m_p+(u_q-\aa_{pq})m_q+(u_r-\aa_{pr})m_r$. If $m+d_i-b_j=v_pm_p+v_qm_q+v_rm_r$, for some $v_i \in \N$, then $(u_p+\aa_{ip}-v_p)m_p=(v_q+\aa_{pq}-u_q)m_q+(v_r+\aa_{pr}-u_r)m_r >0$, which implies that $u_p+\aa_{ip}-v_p\geq \aa_p$, as $\aa_p$ is the smallest with this property. But this contradicts the assumption $u_p < \min\{\aa_{qp},\aa_{rp}\}$.
\end{proof}
\begin{example} Take $S=\langle 7,9,10 \rangle$. Then
$K[S]=R/(f_1,f_2,f_3)$, where $$f_1=x_1^{4}-x_2^{2}x_3^{}, \quad f_2=x_2^{3}-x_1^{}x_3^{2}, \quad f_3=x_3^{3}-x_1^{3}x_2^{}.$$
Since $\aa_{13}=1$, no extension of $S$ will have a SIFRE. On the other hand, the following semigroups will lead
 to infinitely many families of extensions having SIFREs. For $S=\langle 31,37,41 \rangle$, Macaulay2 computes 
 the following generators $$f_1=x_1^{9}-x_2^{2}x_3^{5}, \quad f_2=x_2^{5}-x_1^{2}x_3^{3}, 
 \quad f_3=x_3^{8}-x_1^{7}x_2^{3}.$$ So, $u_1=1,u_2=1$ and $1\leq u_3\leq 2$ give $m=109$ and $m=150$, respectively. 
 Hence, $E=\langle 31\ell,37\ell,41\ell,109 \rangle$ and $E=\langle 31\ell,37\ell,41\ell,109 \rangle$ have  SIFREs,
  for any $\ell$, with $\gcd(\ell,109)=1$ and with $\gcd(\ell,150)=1$, respectively. Similarly, $S=\langle 67,91,93 
  \rangle$ leads to $6$ and $S=\langle 71,93,121 \rangle$ leads to $14$ different infinite families having SIFREs.
\end{example}

Now, we study extensions of a symmetric $4$-generated not complete intersection numerical semigroup using the following theorem.
\begin{theorem}[Bresinsky {\cite[Theorem 5, Theorem 3] {br}\label{br}}]
The semigroup $S$ is $4$-generated symmetric, non-complete intersection if and only if there are integers $\alpha_i$ and
$\alpha_{ij}$, such that
$0<\alpha_{ij}<\alpha_i$, for all $i,j$, with
$ \alpha_1=\alpha_{21}+\alpha_{31}, \alpha_2=\alpha_{32}+\alpha_{42},
\alpha_3=\alpha_{13}+\alpha_{43}, \alpha_4=\alpha_{14}+\alpha_{24}\;\mbox{ and}$
\begin{eqnarray*} m_1=\alpha_2\alpha_3\alpha_{14}+\alpha_{32}\alpha_{13}\alpha_{24},&\quad
m_2=\alpha_3\alpha_4\alpha_{21}+\alpha_{31}\alpha_{43}\alpha_{24}, \\
m_3=\alpha_1\alpha_4\alpha_{32}+\alpha_{14}\alpha_{42}\alpha_{31},&\quad
m_4=\alpha_1\alpha_2\alpha_{43}+\alpha_{42}\alpha_{21}\alpha_{13}. 
\end{eqnarray*}
Then, $K[S]=R/(f_1,f_2,f_3,f_4,f_5)$, where
\begin{eqnarray*}f_1=x_1^{\alpha_1}-x_3^{\alpha_{13}}x_4^{\alpha_{14}},& 
f_2=x_2^{\alpha_2}-x_1^{\alpha_{21}} x_4^{\alpha_{24}},& 
f_3=x_3^{\alpha_3}-x_1^{\alpha_{31}}x_2^{\alpha_{32}},\\
f_4=x_4^{\alpha_4}-x_2^{\alpha_{42}}x_3^{\alpha_{43}},& \quad
f_5=x_3^{\alpha_{43}}x_1^{\alpha_{21}}-x_2^{\alpha_{32}}x_4^{\alpha_{14}}.
\end{eqnarray*}
\end{theorem}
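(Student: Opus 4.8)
The plan is to treat this as a structural classification via the Gorenstein codimension-three structure theorem, since the statement is Bresinsky's and the underlying argument is ring-theoretic rather than purely combinatorial. First I would record the standing hypotheses in algebraic form: $K[S]$ is a one-dimensional graded domain of embedding dimension $4$, so $\codim I_S = 3$; the semigroup $S$ being symmetric is equivalent (Kunz) to $K[S]$ being Gorenstein, i.e. of Cohen--Macaulay $\type K[S]=1$; and $S$ failing to be a complete intersection means $I_S$ cannot be minimally generated by $3=\codim I_S$ elements.

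For the forward implication I would invoke the Buchsbaum--Eisenbud structure theorem: every Gorenstein ideal of codimension $3$ is generated by the $2t\times 2t$ sub-Pfaffians of a $(2t+1)\times(2t+1)$ alternating matrix, so the minimal number of generators is odd. Since $I_S$ is not a complete intersection this number is at least $5$, and for embedding dimension $4$ one shows it is exactly $5$, governed by a $5\times 5$ alternating matrix. Because $I_S$ is a toric (hence binomial) prime, one may arrange this matrix to have monomial entries, so its five $4\times 4$ Pfaffians are the binomials $f_1,\dots,f_5$; reading off their supports produces the exponents $\alpha_i,\alpha_{ij}$, the internal Pfaffian relations among the matrix entries force $\alpha_1=\alpha_{21}+\alpha_{31}$ together with its three companions, and $S$-homogeneity of each $f_k$ yields five linear equations in $(m_1,m_2,m_3,m_4)$. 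This relation matrix has rank $3$, so its kernel is one-dimensional, and the displayed formulas for $m_1,\dots,m_4$ are exactly the primitive positive generator of that kernel.

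The converse is the verification direction. Given integers satisfying $0<\alpha_{ij}<\alpha_i$ and the four additivity relations, I would first substitute the claimed formulas for the $m_i$ into the five homogeneity equations and check, using only the $\alpha$-relations, that each $f_k$ is $S$-homogeneous; this is routine bookkeeping. Then I would exhibit the explicit $5\times 5$ alternating matrix whose $4\times 4$ Pfaffians are $f_1,\dots,f_5$, so that Buchsbaum--Eisenbud guarantees $R/(f_1,\dots,f_5)$ is Gorenstein of codimension $3$; that this ideal is prime and equals $I_S$ follows because the $f_k$ cut out the one-dimensional binomial variety parametrized by $t\mapsto(t^{m_1},\dots,t^{m_4})$, and the five generators are minimal because the inequalities $0<\alpha_{ij}<\alpha_i$ prevent any $f_k$ from lying in the ideal generated by the others. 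Hence $S$ is symmetric but not a complete intersection.

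The step I expect to be the genuine obstacle is pinning down that the minimal number of generators is exactly $5$ and that the alternating matrix can be taken with monomial entries producing precisely the stated binomial supports; this rigidity---ruling out the other admissible Pfaffian shapes and monomial patterns---is the technical heart of Bresinsky's original argument, whereas the homogeneity relations, the rank computation giving the $m_i$, and the minimality of the five generators are comparatively mechanical once the Pfaffian structure is in hand.
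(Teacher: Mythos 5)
First, a point of reference: the paper does not prove this statement at all; it is quoted verbatim from Bresinsky \cite{br}, whose original 1975 argument is purely arithmetic, analyzing the relations among the $m_i$ in the style of Herzog, and in fact predates the Buchsbaum--Eisenbud structure theorem (1977) on which your whole outline rests. So the issue is not whether you match the paper's route (there is none to match), but whether your outline is a proof --- and as written it is not.

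The genuine gap is that the two facts you explicitly defer at the end are the entire content of the theorem. Buchsbaum--Eisenbud gives you only that the minimal number of generators of a codimension-$3$ Gorenstein ideal is odd, hence at least $5$ in the non-complete-intersection case. The assertions that (i) this number is exactly $5$ for a $4$-generated symmetric non-complete-intersection numerical semigroup, and (ii) the $5\times 5$ alternating matrix can be arranged to have monomial entries whose Pfaffians are binomials with precisely the displayed supports (which is what forces $\alpha_1=\alpha_{21}+\alpha_{31}$ and its three companions, and hence the formulas for the $m_i$), are exactly what Bresinsky proves; your proposal offers no argument for either, so it amounts to a reduction of the theorem to its hardest part plus standard machinery. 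Note in particular that (ii) does not follow from multigradedness: an $S$-graded Buchsbaum--Eisenbud matrix has $S$-homogeneous entries, and an $S$-homogeneous polynomial need not be a monomial, since distinct monomials can share an $S$-degree. The converse direction also has a soft spot: ``prime and equal to $I_S$ because the $f_k$ cut out the parametrized curve'' is not an argument. One must check that the ideal of $4\times 4$ Pfaffians has grade $3$ (so that the Buchsbaum--Eisenbud complex is exact and $R/(f_1,\dots,f_5)$ is Cohen--Macaulay, hence unmixed), observe $(f_1,\dots,f_5)\subseteq I_S$ by $S$-homogeneity, and then conclude equality by a multiplicity or degree comparison (or a direct primality argument); one must also verify that the displayed $m_i$ are relatively prime and form a minimal generating set, so that $S$ is genuinely a $4$-generated numerical semigroup. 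Until (i) and (ii) are supplied, the proposal restates the theorem rather than proving it.
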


 If $S=\langle m_1,m_2,m_3,m_4\rangle$ is a symmetric non-complete intersection numerical semigroup then $K[S]$ has a SIFRE by  \cite[Theorem 27]{bafrsa}. Let $E$ be an extension of $S$ with $m=u_1m_1+u_2m_2+u_3m_3+u_4m_4$. Then, we have the following result.
\begin{theorem} $K[E]$ has a SIFRE if and only if $u_p < \min\{\aa_{qp},\aa_{rp}\}$ for all $ p,q,r \in \{ 1,2,3,4\}$ and at most one $u_p=0$ such that
\begin{eqnarray*}
&u_1=0 &\implies  \aa_{32}-\aa_{42}< u_2 \quad \text{or} \quad \aa_{13}-\aa_{43}< u_3, \\
&u_2=0& \implies  \aa_{43}-\aa_{13}< u_3 \quad \text{or} \quad \aa_{24}-\aa_{14}< u_4, \\
&u_3=0&\implies  \aa_{31}-\aa_{21}< u_1 \quad \text{or} \quad \aa_{14}-\aa_{24}< u_4, \\
&u_4=0&\implies  \aa_{21}-\aa_{31}< u_1 \quad \text{or} \quad \aa_{42}-\aa_{32}< u_2.
\end{eqnarray*}
\end{theorem}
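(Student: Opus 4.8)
The plan is to feed the Gorenstein structure of $K[S]$ into Corollary \ref{symmetriccorollary}. Being $4$-generated symmetric and not a complete intersection, $K[S]$ is Gorenstein of codimension $3$, so by Buchsbaum--Eisenbud its minimal free resolution has Betti numbers $1,5,5,1$ and $\pd(S)=3$; by Theorem \ref{res} gluing with $\N\{m\}$ adds one to the length, so $\pd(E)=4$ and $\lfloor\pd(E)/2\rfloor=2$. Hence it suffices to verify $\pm(m+\b'-\b)\notin S$ only for $\b\in\B_i(S)$, $\b'\in\B_{i-1}(S)$ with $i\in\{1,2\}$ and $\B_0(S)=\{0\}$.

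First I would record the Betti degrees. From the presentation in Theorem \ref{br}, $\B_1(S)=\{d_1,\dots,d_5\}$ with $d_p=\deg_S f_p$, namely $d_1=\aa_{13}m_3+\aa_{14}m_4$, $d_2=\aa_{21}m_1+\aa_{24}m_4$, $d_3=\aa_{31}m_1+\aa_{32}m_2$, $d_4=\aa_{42}m_2+\aa_{43}m_3$ and $d_5=\aa_{21}m_1+\aa_{43}m_3=\aa_{32}m_2+\aa_{14}m_4$, where moreover $d_p=\aa_pm_p$ for $p\leq 4$. Since $K[S]$ is Gorenstein its resolution is self-dual, whence $\B_2(S)=\{c-d_1,\dots,c-d_5\}$ for the single top Betti degree $c$; as $S$ is symmetric with Frobenius number $F(S)$ and $N:=m_1+m_2+m_3+m_4$ one has $c=F(S)+N$. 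The conditions to be checked thus become $\pm(m-d_p)\notin S$ (from $i=1$) and $\pm(m+d_p+d_r-c)\notin S$ (from $i=2$, writing $\b=c-d_r$).

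To establish the non-memberships I would imitate the $3$-generated proof. For $\pm(m-d_p)$ and for the $+(m+d_p+d_r-c)$ part, a hypothetical membership gives a second representation of the element in $m_1,\dots,m_4$; subtracting the two relations forces a positive combination of three generators to equal a multiple of the fourth strictly below the minimal multiple $\aa_pm_p$ guaranteed by Herzog--Bresinsky, a contradiction. The remaining part, $(c-d_r)-m-d_p\notin S$, I would deduce from the pseudo-Frobenius relation $F(S)=c-N\notin S$: writing $(c-d_r)-m-d_p=(c-N)-s$ with $s$ a nonnegative integer combination of the $m_i$ shows the left side is not in $S$, precisely as $b_i-N\notin S$ was used before (equivalently, the symmetry $x\in S\iff F(S)-x\notin S$ recasts this as the membership $m+d_p+d_r-N\in S$). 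On the necessity side, specializing to the index realizing each difference extracts the inequality $u_p<\min\{\aa_{qp},\aa_{rp}\}$.

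The main obstacle is the degenerate analysis when some $u_p$ vanishes. When all $u_p>0$ the combination $s$ above has nonnegative coefficients and every argument closes uniformly, forcing all $u_p>0$ exactly as in the $3$-generated theorem. If one coordinate is zero, the coefficient of the corresponding $m_p$ in $s$ becomes negative and the pseudo-Frobenius subtraction no longer certifies non-membership; one must then switch to the second monomial representation of the affected degree (crucially $d_5$, together with the dependencies $\aa_1=\aa_{21}+\aa_{31}$, $\aa_2=\aa_{32}+\aa_{42}$, $\aa_3=\aa_{13}+\aa_{43}$, $\aa_4=\aa_{14}+\aa_{24}$) to see that the element survives outside $S$ exactly under one of the two displayed inequalities. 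Checking that these disjunctions are simultaneously necessary and sufficient, and that two coordinates cannot vanish at once so that at most one $u_p$ may be zero, is the crux of the computation; since the four cases are interchanged by the cyclic relabelling $p\mapsto p+1$, it is enough to carry out the analysis for $u_1=0$, where the surviving obstruction is governed by $\aa_{32}-\aa_{42}<u_2$ or $\aa_{13}-\aa_{43}<u_3$.
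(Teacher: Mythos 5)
Your setup is sound and matches the paper's: reduce via Corollary \ref{symmetriccorollary} to checking only $i=1,2$ (since $\pd(E)=4$), read off $\B_1(S)=\{d_1,\dots,d_5\}$ from Bresinsky's presentation, and use Gorenstein self-duality to write $\B_2(S)=\{c-d_1,\dots,c-d_5\}$ with $c=F(S)+N$; this is equivalent to the relations the paper imports from \cite{bafrsa}, and your remark that symmetry of $S$ converts $(c-d_r)-m-d_p\notin S$ into the membership $m+d_p+d_r-N\in S$ is exactly the paper's device for the diagonal pairs $a_i-d_i-m$. The cyclic-symmetry reduction to the case $u_1=0$ is also legitimate, since Bresinsky's data is invariant under the relabelling $p\mapsto p+1$, and the paper tacitly uses this (``the other cases can be done the same way'').

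However, the proposal stops precisely where the proof begins: essentially the entire content of the paper's argument lies in the case analysis you defer with ``I would imitate the $3$-generated proof'' and ``checking that these disjunctions are simultaneously necessary and sufficient \dots is the crux of the computation.'' Concretely missing are: (a) the necessity direction, where one must exhibit, for each $\aa_{pi}$, indices $j,k$ with $d_j-a_k=-\aa_{pi}m_i$ to force $u_i<\aa_{pi}$, show $d_i-m\in S$ when two coordinates vanish, and show $a_4-d_4-m\in S$ when $u_1=0$ and both displayed inequalities fail; (b) the sufficiency direction, where the contradictions are not all of the single type you describe (a positive combination of three generators equal to a multiple of the fourth below $\aa_pm_p$) --- the paper needs, variously, the indispensability of the $f_i$ (uniqueness of the two monomials of $S$-degree $d_i$, e.g.\ for $d_i-m\notin S$ and for $a_1-d_3-m\notin S$), the minimality of the Betti degrees $d_5$ and $d_1$ in the $S$-order (for the degenerate case $u_1=0$, which is exactly where the four disjunctions are born), and the Frobenius identity $F(S)=a_i+d_i-N$ together with a representation-by-representation verification of $m+2d_i-N\in S$, whose failure modes $(i,p)\in\{(4,1),(1,2),(2,3),(3,4)\}$ single out the exceptional cases. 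Even your uniform symmetry reformulation, applied to off-diagonal pairs, still requires choosing among the combined monomial representations of $d_p+d_r$ according to which $u_q$ vanishes, a check you never perform. Naming the tools (``second monomial representation of $d_5$,'' ``the dependencies $\aa_1=\aa_{21}+\aa_{31}$, \dots'') is not the same as running them; as written this is a correct plan with the same skeleton as the paper, but not a proof.
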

\begin{proof} We use \cite[Corollary 13]{bafrsa} and Theorem \ref{br} for all the relations involving $a_i$ and $d_i$, where $d_i=\deg_S(f_i)$, and $a_i$ is the $S$-degree of a first syzygy, for $i=1,\dots,5$. We first prove the necessity of these conditions. Assume $K[E]$ has a SIFRE. Then by Corollary \ref{symmetriccorollary}, $m+d_j-a_k\notin S$ and $d_i-m\notin S$, Given $\aa_{pi}$ there are $j,k$ with $d_j-a_k=-\aa_{pi}m_i$. So, if $u_i\geq\aa_{pi}$, then $\displaystyle m+d_j-a_k=\sum_{q \neq i}u_qm_q+(u_i-\aa_{pi})m_i \in S$. Therefore, $u_p < \min\{\aa_{qp},\aa_{rp}\}$ for all $\{ p,q,r\}=\{ 1,2,3,4\}$. If $u_p=u_q=0$, then $d_i-m=(\aa_{jr}-u_r)m_r+(\aa_{ts}-u_s)m_s-u_pm_p-u_qm_q\in S$. So, at most one $u_p=0$. If  $\aa_{32}-\aa_{42}\geq u_2$ and $\aa_{13}-\aa_{43}\geq u_3$ when $u_1=0$, then $a_4-d_4-m=(\aa_{32}-\aa_{42}- u_2)m_2+(\aa_{13}-\aa_{43}- u_3)m_3+(\aa_{14}-u_4)m_4\in S$. The others are shown similarly. Next, we prove sufficiency. Assume $u_p < \min\{\aa_{qp},\aa_{rp}\}$ for all $\{ p,q,r\}=\{ 1,2,3,4\}$ and at most one $u_p=0$. Then, $d_i-m=\sum v_jm_j$ implies $d_i=\sum (u_j+v_j)m_j$. Since $f_i$ is indispensable, there are only two monomials with $S$-degree $d_i$. So, $\sum (u_j+v_j)m_j$ must be $\aa_im_i$ or $\aa_{pq}m_q+\aa_{rs}m_s$. In any case, at least two $u_j=0$, which is a contradiction. So, $m-d_i\notin S$. If $m-d_i=\sum v_jm_j$, then $d:=(u_i-v_i)m_i+(u_j-v_j)m_j=(u_q-\aa_{pq}-v_q)m_q+(u_s-\aa_{rs}-v_s)m_s>0$. Since $u_i-v_i<\aa_i$ and $u_j-v_j<\aa_j$, we get $u_i>v_i$ and $u_j>v_j$ by the minimality of $\aa_i$ and $\aa_j$. But then $d<_S d_s=\aa_{pi}m_i+\aa_{qj}m_j$, which contradicts the minimality of $d_s$. So, $m-d_i\notin S$. For, $i\neq j$ and $(i,j)\notin \{(1,3),(2,4)\}$, we have $a_i-d_j=\aa_{pq}m_q$. So, if $a_i-d_j -m=\sum v_jm_j$, then $(\aa_{pq}-u_q-v_q)m_q=\sum_{j\neq q} (u_j+v_j)m_j \geq 0$.  Since $\aa_{pq}-u_q-v_q<\aa_q$, all $u_i=0$ for $i\neq q$. So, $a_i-d_j -m\notin S$. If $a_1-d_3-m=\sum v_jm_j$, then $(\aa_{2}-u_2-v_2)m_2=(\aa_{13}+u_3+v_3)m_3+ (u_1+v_1)m_1+(u_4+v_4)m_4 > 0$. By the minimality of $\aa_{2}$, $u_2=v_2=0$ in which case we have a third monomial $x_1^{u_1+v_1}x_3^{\aa_{13}+u_3+v_3}x_4^{u_4+v_4}$ of $S$-degree $d_2$, which is a contradiction to the indispensability of $f_2$. Similarly, $a_2-d_4-m=\aa_1m_1-\aa_{42}m_2-m\notin S$. To prove that $a_i-d_i-m \notin S$, it suffices to see $m+2d_i-N \in S$, as the Frobenius number of $S$, which is the biggest integer not in $S$, is $a_i+d_i-N=(a_i-d_i-m)+(m+2d_i-N)$ by \cite[Corollary 14]{bafrsa}, where $N=\sum_j m_j$. If all $u_i>0$ then $m-N\in S$, so $m+2d_i-N \in S$. If $u_p=0$ for some $p\in\{1,2,3,4\}$, then $m+2d_i-N=(d_i+m-m_j-m_q-m_r)+(d_i-m_p) \in S$, except for $(i,p)\in\{(4,1),(1,2),(2,3),(3,4)\}$. When $u_1=0$, we have $a_4-d_4-m=(\aa_{32}-\aa_{42}-u_2)m_2+(\aa_{13}-\aa_{43}-u_3)m_3+(\aa_{14}-u_4)m_4$. If $a_4-d_4-m=\sum v_jm_j$, then either $\aa_{32}-\aa_{42}\geq u_2$ or $\aa_{13}-\aa_{43} \geq u_3$ as $0<\aa_{14}-u_4< \aa_4$. If $\aa_{32}-\aa_{42}\geq u_2$ then $\aa_{13}-\aa_{43} < u_3$ and thus we have
$$(\aa_{32}-\aa_{42}-u_2-v_2)m_2+(\aa_{14}-u_4-v_4)m_4=v_1m_1+(\aa_{43}-\aa_{13}+u_3+v_3)m_3.$$
This gives a binomial in $I_S$ of degree $d$ with $d<_S d_5$, which is a contradiction to the minimality of $d_5$. If $\aa_{13}-\aa_{43} \geq u_3$, then we get similarly a binomial of $S$-degree less than $d_1$. The other cases can be done the same way. Finally, we prove that $m+d_j-a_i \notin S$. For $i\neq j$ and $(i,j)\notin \{(1,3),(2,4)\}$, we have $m+d_j-a_i=\sum_{p\neq q}u_pm_p+(u_q-\aa_{rq})m_q$. If $m+d_j-a_i=\sum v_pm_p \in S$, then $(u_r-v_r)m_r+(u_s-v_s)m_s=(v_p-u_p)m_p+(\aa_{rq}-u_q+v_q)m_q$, as the other cases contradicts the minimality of some $\aa_t$. But this gives a binomial in $I_S$ of $S$-degree less than $d_t=\aa_{pr}m_r+\aa_{qs}m_s$, which is a contradiction. Now, $d_1-a_1=\aa_{31}m_1-\aa_{43}m_3-\aa_{24}m_4$. If $m+d_1-a_1=\sum v_jm_j$, then $(u_2-v_2)m_2+(\aa_{31}+u_1-v_1)m_1=(\aa_{43}-u_3+v_3)m_3+(\aa_{24}-u_4+v_4)m_4>0$. If one term of the left hand side is negative then we get a contradiction to the minimality of $\aa_1$ and $\aa_2$. So, this gives a binomial in $I_S$ of $S$-degree $d$. Only $d_3$ may be less than $d$, so $\aa_{32}\leq u_2-v_2\leq u_2$, which is a contradiction. The rest is similar and we are done.
\end{proof}
\begin{remark} Using the formulas in Theorem \ref{br}, one can now produce infinitely many symmetric non complete intersections $S=\langle m_1,m_2,m_3,m_4\rangle$ having SIFREs.
\end{remark}

There is a classification of  $4$-generated pseudo symmetric semigroups having SIFRE in \c{S}ahin and \c{S}ahin \cite{pseudo}. The next result reveals that none of the extensions of these semigroups have a SIFRE. 
\begin{theorem} Let $S$ be a $4$-generated pseudo symmetric semigroup and $E$ be one of its extensions. Then $K[E]$ does not have a SIFRE.
\end{theorem}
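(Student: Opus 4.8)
The plan is to reduce the claim to a single failure of the criterion in Theorem~\ref{main1}, exploiting the extra syzygy that pseudo symmetry forces into the resolution. First I would recall the explicit minimal free resolution of a $4$-generated pseudo symmetric numerical semigroup. Such semigroups are known (via the work surveyed in \c{S}ahin and \c{S}ahin \cite{pseudo}, building on Komeda's classification) to have $K[S]=R/I_S$ with $I_S$ generated by five binomials, and the projective dimension of $K[S]$ equals $3$, with Betti numbers $(1,5,5,2)$. I would write down the generators $f_1,\dots,f_5$ and the degrees $d_i=\deg_S(f_i)$ of $\B_1(S)$, together with the first-syzygy degrees comprising $\B_2(S)$ and the two top degrees in $\B_3(S)$, using the standard presentation in which the pseudo-Frobenius structure is visible. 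The key structural fact I want to isolate is a relation among these degrees of the form $\m + \b' - \b \in S$ that must hold for \emph{every} choice of the extension data $\m=u_1m_1+u_2m_2+u_3m_3+u_4m_4$.

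The main step is then to produce, for an arbitrary extension $E$ of $S$, a pair $\b\in\B_i(S)$, $\b'\in\B_{i-1}(S)$ violating the condition $\pm(\m+\b'-\b)\notin S$ of Theorem~\ref{main1}. The natural candidate comes from the pseudo symmetry itself: unlike the symmetric case, a pseudo symmetric semigroup has two pseudo-Frobenius elements, and the asymmetry in the top of the resolution gives a Betti degree that sits \emph{strictly below} a larger one in the $S$-ordering by a generator $m_p$ appearing with positive multiplicity. Concretely, I expect to find indices where $\b - \b' = c\,m_p$ for some coefficient $c$ large enough that $\m + \b' - \b = \m - c\,m_p = \sum_j u_j m_j - c\,m_p$ is forced to lie in $S$ regardless of the $u_j$, because the constraints $0<u_p<\min\{\aa_{qp},\aa_{rp}\}$ that were available to rescue the symmetric case either are incompatible here or do not bound $u_p$ against $c$. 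I would verify this by a direct degree comparison using the explicit matrices of the resolution, taking $i$ in the range $1\le i\le \pd(S)+1=4$ as Theorem~\ref{main1} permits.

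The hard part will be pinning down exactly which Betti-degree pair forces the containment uniformly across all extensions, since a priori the failing relation could depend on the $u_j$. I would handle this by showing that the obstruction is intrinsic to the presentation: the relevant difference $\b-\b'$ equals an $S$-element of the form $\aa_{pq}m_q$ (one of the exponent data from the defining binomials) with $\m$ decomposing so that $\m + \b' - \b$ simplifies to a non-negative integer combination of the $m_i$. Because pseudo symmetry forces one of these exponents to be $1$ (this is the structural analogue, in the $4$-generated pseudo symmetric case, of the condition $\aa_{pq}=1$ that killed all extensions in the $3$-generated non-symmetric case), any positive $u$ on the corresponding generator immediately yields membership in $S$; and the case analysis on which $u_p$ vanish is dispatched exactly as in the preceding symmetric theorem. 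I would close by invoking Lemma~\ref{indispensable} (through Theorem~\ref{main1}) to conclude that $K[E]$ fails to have a SIFRE for every extension $E$.
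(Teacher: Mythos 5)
Your high-level strategy---apply Theorem \ref{main1} by exhibiting, for every extension, a pair of Betti degrees in consecutive homological degrees that violates the condition $\pm(m+\b'-\b)\notin S$---is the same as the paper's, but the mechanism you propose for producing that pair has a genuine gap, and it is exactly the case you defer at the end. The paper's proof rests on one sharp fact imported from the proof of Theorem~2.5 in \cite{pseudo}: the top Betti degree $c_1\in\B_3(S)$ satisfies \emph{simultaneously} $c_1=b_1+m_4=b_2+m_1=b_4+m_3=b_5+m_2$ with $b_1,b_2,b_4,b_5\in\B_2(S)$, i.e.\ its differences with second-syzygy degrees realize \emph{all four} generators $m_1,m_2,m_3,m_4$. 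Hence for any extension, writing $m=u_1m_1+\cdots+u_4m_4$, at least one $u_j$ is positive (since $m\in S\setminus\{0\}$), and choosing the $b_i$ matched to that $j$ gives $m+b_i-c_1=m-m_j\in S$, so Theorem \ref{main1} denies the SIFRE. No case analysis is needed precisely because there is one relation per generator.

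Your proposal instead hunts for a \emph{single} relation $\b-\b'=c\,m_p$ and claims $m-c\,m_p\in S$ ``regardless of the $u_j$.'' This fails on two counts. First, the logic is inverted: the larger $c$ is, the \emph{harder} it is for $m-c\,m_p$ to lie in $S$; what one needs is $c=1$. Second, and fatally, an extension only requires $m\in S$, not that every coefficient be positive, so if $u_p=0$ then $m-m_p$ need not lie in $S$, and no single distinguished generator $m_p$ can kill every extension. Your appeal to dispatching the vanishing-$u_p$ cases ``exactly as in the preceding symmetric theorem'' does not repair this: there the case analysis verifies conditions under which an extension \emph{has} a SIFRE, whereas here you must prove failure for \emph{every} admissible $(u_1,\dots,u_4)$, including those supported away from $m_p$. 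Likewise, the analogy with the $3$-generated criterion (``some exponent equals $1$ makes $0<u_p<\min\{\aa_{qp},\aa_{rp}\}$ unsatisfiable'') presupposes a full characterization theorem for pseudo symmetric extensions that you have not proved. The missing idea is the covering property of $c_1$ stated above; once you have it, the proof is three lines. (Minor: the Betti sequence is $(1,5,6,2)$, not $(1,5,5,2)$, though this is immaterial to either argument.)
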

\begin{proof} By the proof of Theorem~2.5 in \cite{pseudo}, we have $\B_1(S)=\{d_1,\dots,d_6\}$, $\B_2(S)=\{b_1,\dots,b_6\}$ and $\B_3(S)=\{c_1,c_2\}$, where $$c_1=b_1+m_4=b_2+m_1=b_4+m_3=b_5+m_2.$$
Thus, if $m=u_1m_1+\cdots+u_4m_4$ with $u_j>0$, then there is some $i$ such that $m+b_i-c_1=m-m_j \in S$. The result now follows from Theorem \ref{main1}.
\end{proof}

\section*{Acknowledgements} The authors would like to thank Anargyros Katsabekis for his
suggestions on the preliminary version of the paper. They also thank the referee for helpful comments and suggestions. All the examples were computed by using the computer algebra system Macaulay $2$, see \cite{Mac2}.

\bibliographystyle{amsplain}

\end{document}